\documentclass[preprint,10pt]{elsarticle}
\usepackage{amssymb}
\usepackage{amsthm}
\usepackage{graphics}
\usepackage{amsmath}
\usepackage[dvipsnames,usenames]{color}
\usepackage{subfigure}
\usepackage{bm}
\newtheorem{thm}{Theorem}[section]

\newtheorem{lem}[thm]{Lemma}

\newtheorem{ass}[thm]{Assumption}
\newtheorem{rem}{Remark}

\theoremstyle{definition}

\numberwithin{equation}{section}

\journal{.}

\begin{document}

\begin{frontmatter}

%% Title, authors and addresses

%% use the tnoteref command within \title for footnotes;
%% use the tnotetext command for the associated footnote;
%% use the fnref command within \author or \address for footnotes;
%% use the fntext command for the associated footnote;
%% use the corref command within \author for corresponding author footnotes;
%% use the cortext command for the associated footnote;
%% use the ead command for the email address,
%% and the form \ead[url] for the home page:
%%
%% \title{Title\tnoteref{label1}}
%% \tnotetext[label1]{}
%% \author{Name\corref{cor1}\fnref{label2}}
%% \ead{email address}
%% \ead[url]{home page}
%% \fntext[label2]{}
%% \cortext[cor1]{}
%% \address{Address\fnref{label3}}
%% \fntext[label3]{}

\title{McKean-Vlasov stochastic differential equations with super-linear measure arguments: well-posedness and propagation of chaos}

%% use optional labels to link authors explicitly to addresses:
%% \author[label1,label2]{<author name>}
%% \address[label1]{<address>}
%% \address[label2]{<address>}
\author[label1]{Zhuoqi Liu}
\author[label1]{Qian Guo}
\author[label2]{Shuaibin Gao}
\author[label3]{Chenggui Yuan}

\address[label1]{Department of Mathematics, Shanghai Normal University, Shanghai 200234, China}
\address[label2]{School of Mathematics and Statistics, South-Central Minzu University,
Wuhan 430074, China}
\address[label3]{Department of Mathematics, Swansea University, Bay Campus, Swansea SA1 8EN, UK}

\begin{abstract}
This paper studies McKean-Vlasov stochastic differential equations (MVSDEs) whose drift coefficients grow super-linearly in both state variables and measure arguments, and whose diffusion coefficients exhibit super-linear growth in the state variables. 
By constructing an Euler-like sequence, we establish the strong well-posedness of such MVSDEs under a locally monotone condition.	Furthermore, the propagation of chaos is studied on both finite and infinite horizons, demonstrating convergence of the interacting particle system to the corresponding non-interacting system.
To illustrate the rationality of the theoretical results, we provide examples whose drifts contain the high powers and multiple integrals of distributions, with numerical simulations presented in Section 6.
\end{abstract}

\begin{keyword}
McKean-Vlasov stochastic differential equations; well-posedness; propagation of chaos; super-linear coefficients; locally monotone condition
\end{keyword}

\end{frontmatter}

%%
%% Start line numbering here if you want
%%
% \linenumbers

%% main text

\section{Introduction}\label{secc1}
The theories of MVSDEs have attracted considerable attention due to their wide range of applications in fields such as physics, biology, and social science  \cite{appli1,appli2,appli3}. The coefficients of MVSDEs not only depend on the state variables but also relate to the law information of state variables, so they are also called distribution-dependent SDEs or mean-field SDEs. 
Let $b(\cdot,\cdot)$ and $\sigma(\cdot,\cdot)$ be continuous on the space $\mathbb{R}^d\times\mathcal{P}_{2}(\mathbb{R}^d)$ with $b:\mathbb{R}^d\times\mathcal{P}_{2}(\mathbb{R}^d)\rightarrow\mathbb{R}^d$ and $\sigma:\mathbb{R}^d\times\mathcal{P}_{2}(\mathbb{R}^d)\rightarrow\mathbb{R}^{d\times m_1}$. Let $(W_t, t\geq 0)$ be a standard $m_1$-dimensional Brownian motion defined on a complete filtered
probability space $\big(\Omega,\mathcal{F},(\mathcal{F}_t)_{t\geq0},\mathbb{P}\big)$.
This paper is devoted to investigating the MVSDEs of the form:
\begin{equation}\label{MV}
	dX_{t}=b(X_{t},\mu_{t})dt+\sigma(X_{t},\mu_{t})dW_{t},~~~X_0=\xi,
\end{equation}
where $\mu_t:=\mathcal{L}(X_t)$ denotes the law of $X_t$. We use the notation $\mathcal{L}(\cdot)$ to represent the law of a random variable throughout the whole paper.
The initial value $\xi$ of (\ref{MV}) is a random variable which belongs to $\mathbb{L}^p(\mathbb{R}^d)$, which is the set of random variables $\zeta$ satisfying $\mathbb{E}|\zeta|^p<\infty$ for $p\geq1$.
The study of MVSDEs originated in \cite{Mck}, which was inspired by the kinetic theory in \cite{Kac}. 
The strong well-posedness of solutions to MVSDEs has been extensively investigated under different conditions with respect to (w.r.t.) the state variables in coefficients such as linear growth and global Lipschitz condition \cite{topic}, one-sided Lipschitz drift and global Lipschitz diffusion \cite{wangfy}, and super-linear drifts and diffusions \cite{kumar}.
As for the MVSDEs with irregular coefficients, the well-posedness theories were also established in \cite{baoirr,holder,tamedhuagui}.
The common feature of the equations in \cite{topic,wangfy,kumar,baoirr,holder,tamedhuagui,liyun} is that the distribution components in the coefficients satisfy the global Lipschitz conditions in the Wasserstein distance sense.

However, consider the following two  typical classes of MVSDEs with super-linear measure arguments:
\begin{equation}\label{1}
	dX_{t}=\Big(-X_{t}-X_{t}\Big(\int_{\mathbb{R}^d}x\mu_t(dx)\Big)^2\Big)dt+\Big(X_{t}+\int_{\mathbb{R}^d}x\mu_t(dx)\Big)dW_{t},
\end{equation}
and
\begin{equation}\label{2}
	dX_{t}=\iint_{\mathbb{R}^d\times\mathbb{R}^d}b(X_{t},y,z)\mu_{t}(dy)\mu_{t}(dz)dt+\int_{\mathbb{R}^d}\sigma(X_{t},y)\mu_{t}(dy)dW_t,
\end{equation}
where $\mu_{\cdot}$ is the law of $X_{\cdot}$, $b$ is continuously differentiable w.r.t. to $y$ and $z$, $\partial_{y}b$ and $\partial_{z}b$ grow at most linearly in $(y,z)$. The drift coefficients w.r.t. measure arguments in (\ref{1}) and (\ref{2}) no longer satisfy the global Lipschitz condition, but satisfy the local Lipschitz condition. 
To prove this viewpoint rigorously, some notations are given first.
Let $|\cdot|$ and $\|\cdot\|$ denote the Euclidean norm in $\mathbb{R}^d$ and the trace norm in $\mathbb{R}^{d\times m_1}$, respectively. 
Denote $a\vee b=\max\{a,b\}$ and $a\wedge b=\min\{a,b\}$ for any real numbers $a,b$.
Let $\mathcal{P}(\mathbb{R}^d)$ be the space of all probability measures on $\big(\mathbb{R}^d,\mathcal{B}(\mathbb{R}^d)\big)$, where $\mathcal{B}(\mathbb{R}^d)$ stands for the Borel $\sigma$-field over $\mathbb{R}^d$. 
For $p\geq1$, define 
\begin{equation*}
	\mathcal{P}_{p}(\mathbb{R}^d)=\left\lbrace \mu\in\mathcal{P}(\mathbb{R}^d):\|\mu\|:=\int_{\mathbb{R}^d}|x|^p\mu(dx)<\infty\right\rbrace ,
\end{equation*}
which is the subset of probability measures with bounded moments. 
The Wasserstein distance between $\mu,\nu\in\mathcal{P}_{p}(\mathbb{R}^d)$ is defined by
\begin{equation*}
	\mathbb{W}_p(\mu,\nu):=\inf_{\pi\in\mathcal{C}(\mu,\nu)}\left(  \int_{\mathbb{R}^d\times\mathbb{R}^d}|x-y|^p\pi(dx,dy)\right)^\frac{1}{p} ,
\end{equation*}
where  $\mathcal{C}(\mu,\nu)$ is the family of all couplings for $\mu,\nu$, i.e., $\pi(\cdot, \mathbb{R}^d)= \mu(\cdot)$ and $\pi( \mathbb{R}^d,\cdot)= \nu(\cdot)$. 

First, we show that the drift in (\ref{1}) satisfies the local Lipschitz condition w.r.t. measure.
Let us introduce a function $h:\mathcal{P}_2(\mathbb{R}^d)\rightarrow\mathbb{R}$, which is defined by $h(\mu)=\big(\int_{\mathbb{R}^d}x\mu(dx)\big)^2$, $\mu\in\mathcal{P}_2(\mathbb{R}^d)$. Thus, for measures $\mu,\nu\in\mathcal{P}_2(\mathbb{R}^d)$, it holds that
\begin{equation}
	\begin{split}
		h(\mu)-h(\nu)&=\big(\int_{\mathbb{R}^d}x\mu(dx)\big)^2-\big(\int_{\mathbb{R}^d}y\nu(dy)\big)^2\\
		&=\big(\int_{\mathbb{R}^d}x\mu(dx)+\int_{\mathbb{R}^d}y\nu(dy)\big)\big(\int_{\mathbb{R}^d}x\mu(dx)-\int_{\mathbb{R}^d}y\nu(dy)\big)	\\
		&\leq C\big(1+\|\mu\|_2+\|\nu\|_2\big)\mathbb{W}_1(\mu,\nu).
	\end{split}
\end{equation}

Next, we reveal that the drift in (\ref{2}) also satisfies the local Lipschitz condition w.r.t measure argument. 
%	The required definitions and properties of Lions derivatives are stated in the Appendix. 
%	For simplicity, we use notation $\mathcal{L}(\cdot)$ to represent the law of random variable throughout the whole paper.
Let $u$: $\mathcal{P}_2(\mathbb{R}^d)\rightarrow\mathbb{R}$ be a function and its lifted function $\tilde{u}$ is defined by $\tilde{u}(X)=u(\mu )$, $X\in \mathbb{L}^2(\mathbb{R}^d)$ and $\mu=\mathcal{L}(X)$. 
The function $u$ is said to be L-differentiable at $\mu_0\in\mathcal{P}_2(\mathbb{R}^d)$ if there exists a random variable $X_0$ with $\mathcal{L}(X_0)=\mu_0$, such that the lifted function $\tilde{u}$ is Fr\'{e}chet differentiable at $X_0$, and its derivative is denoted by $D\tilde{u}$. From Riez's representation theorem,  $D\tilde{u}$ can be identified as a $\sigma(X_0)$-measurable random variable in $\mathbb{L}^2(\mathbb{R}^d)$, and is used to represent the function $\partial_{\mu} u(\mu_0)(\cdot):\mathbb{R}^d\rightarrow\mathbb{R}^d$, which is regarded as the L-derivative of $u$ at $\mu_0$.
According to  \cite[Remark 2.3]{poly2}, it holds that
\begin{equation}\label{uv}
	\begin{split}
		\big|u({\mu})-u(\nu)\big|=&\left|\int_{0}^{1}\mathbb{E}\big
		[\partial_{\mu}u\big(\mathcal{L}(\lambda X+(1-\lambda) Y)\big)\big(\lambda X+(1-\lambda) Y\big)\big(X-Y\big)\big]d\lambda\right|\\
		\leq&\|D\tilde{u}\|_{2}\mathbb{W}_{2}(\mu,\nu),\\
	\end{split}
\end{equation}
where $X$ and $Y$ are two independent random variables in $\mathbb{L}^2(\mathbb{R}^d)$ with $\mu:=\mathcal{L}(X)$ and $\nu:=\mathcal{L}(Y)$, and
\begin{equation*}
	\|D\tilde{u}\|_{2}:=	\Big(\mathbb{E}\big<\partial_{\mu}u(\mathcal{L}({\cdot}))(\cdot),\partial_{\mu}u(\mathcal{L}({\cdot}))(\cdot)\big>\Big)^\frac{1}{2}.
\end{equation*}
The key point to estimate the right-hand side of (\ref{uv})  is to evaluate $\|D\tilde{u}\|_{2}$. 
Let
\begin{equation}\label{fff}
	u({\mu})=\iint_{\mathbb{R}^d\times\mathbb{R}^d}f(x,y)\mu_{s}(dx)\mu_{s}(dy),
\end{equation}
which is a quadratic function of measure $\mu$ with $f:\mathbb{R}^d\times \mathbb{R}^d\rightarrow\mathbb{R}$. Assume $f$ is continuously differentiable in $(x,y)$ alongside the first partial derivative $\partial_{x}$ and $\partial_{y}$ are mostly linear growth in $(x,y)$, that is to say,
\begin{equation}\label{linear}
	\big|\partial_{x}f(x,y)\big|\vee\big|\partial_{y}f(x,y)\big|\leq C(1+|x|+|y|).
\end{equation}
This,  together  with \cite[Example 4, p385]{2018},
one can see that
\begin{equation}\label{111}
	\begin{split}
		\big|\partial_{\mu}u(\mu)(x)\big|^2\leq&\int_{\mathbb{R}^d} \big|\partial_{x}f(x,y)\big|^2\mu(dy)+\int_{\mathbb{R}^d} \big|\partial_{y}f(x,y)\big|^2\mu(dy)\\
		\leq&C\int_{\mathbb{R}^d} \big(1+|x|^{2}+|y|^{2}\big)\mu(dy)\\
		\leq&C \big(1+|x|^{2}+\mathbb{E}|X|^{2}\big).
	\end{split}
\end{equation}
%	where $\mu:=\mathcal{L}(X)$.
Then, by Lagrange's mean value Theorem, it can be deduced that there exists $\lambda^{*}\in(0,1)$ such that 
\begin{equation}\label{Z}
	\|D\tilde{u}\|_{2}=	\Big(\mathbb{E}\big<\partial_{\mu}u(\mathcal{L}_{Z})(Z),\partial_{\mu}u(\mathcal{L}_{Z})(Z)\big>\Big)^\frac{1}{2}
\end{equation}
with $Z=\lambda^{*} X+(1-\lambda^{*}) Y$. In conclusion, associating (\ref{111}) with (\ref{Z}) gives that
\begin{equation}
	\begin{split}
		\Big(\mathbb{E}\big|\partial_{\mu}u(\mathcal{L}_{Z})(Z)\big|^2\Big)^\frac{1}{2}\leq&C \big(1+\mathbb{E}|Z|^2\big)^\frac{1}{2}
		\leq C \big(1+\mathbb{E}|X|^2+\mathbb{E}|Y|^2\big)^\frac{1}{2}.
	\end{split}
\end{equation}
Thus, the measure in (\ref{fff}) satisfies the local Lipschitz condition, that is
\begin{equation}\label{101}
	|u({\mu})-u(\nu)|^2\leq C(1+\|\mu\|_2^2+\|\nu\|^2_2)\mathbb{W}_{2}^2(\mu,\nu).
\end{equation}

To sum up, the drifts of (\ref{1}) and (\ref{2}) both fulfill the above local Lipschitz condition (\ref{101}) but no longer satisfy the global Lipschitz condition in the Wasserstein distance sense.

In fact, a rigorous analysis has been conducted for  MVSDEs with measures in coefficients satisfying the local condition in \cite{first}. When the drift and diffusion coefficients satisfy local Lipschitz conditions w.r.t. state variables and measure variables, the strong well-posedness of McKean-Vlasov equations with jumps was provided in \cite{jump}, as well as the propagation of chaos. However, the uniform boundedness of coefficients was required. The existence of weak solution and the pathwise uniqueness of MVSDE with unbounded coefficient on a domain under measure-dependent Lyapunov conditions were studied in \cite{luk}. 
In \cite{hongliu}, the authors established the strong and weak well-posedness of the MVSDE with a locally monotone condition on the state variables and measure variables, while additionally requiring that the diffusion term be globally Lipschitz in these variables.
For the MVSDEs with drifts and diffusions super-linearly growing in measure and space, in \cite{dossup1}, the diffusion coefficient is assumed to satisfy a super-linear growth condition w.r.t. the measure argument, while in \cite{dossup2}, the growth in the measure argument is restricted to be at most linear. However,
In both of the aforementioned studies, the features of super-linear growth in measure argument are  imposed some conditions on the kernel functions within a designated convolution framework.
Both works rigorously establish the well-posedness of MVSDEs and provide detailed convergence analysis for the associated split-step Euler schemes. Moreover, \cite{dossup1} further proves the exponential ergodicity together with the existence of an invariant distribution for the underlying equation.
Beyond these developments, the integration by parts formulae for MVSDEs were developed in \cite{inter}.

We proceed to highlight the work of Kac in \cite{Kac}, who provided the first rigorous mathematical definition of chaos and introduced the idea that, in time-evolving systems, chaos should be propagated over time, a property hence known as the ``propagation of chaos".
Building upon this foundation, \cite{topic} carried out a more systematic and comprehensive analysis of the propagation of chaos, extending Kac's original idea to a more general setting.
Specifically,
for $N$-interacting particle system of SDEs, when the initial distribution of particles is chaotic, the propagation of chaos theory in \cite{topic} shows that the particles behave increasingly independently as the number of particles $N$ tends to infinity, which means that the empirical measure of the particle system converges to the distribution appearing in the coefficients of the MVSDEs as $N\rightarrow\infty$. 
For MVSDEs whose coefficients satisfy global Lipschitz conditions w.r.t. measure arguments, the propagation of chaos over a finite time horizon has been investigated in \cite{kumar,baoirr,tamedhuagui,liyun,singular}, while the results on uniform-in-time propagation of chaos have been derived in \cite{uni1,uni2}. In contrast, under local Lipschitz conditions w.r.t. measure arguments, the finite-time propagation of chaos has been established in \cite{jump,dossup1,dossup2,spde}.

In comparison with the aforementioned literature, the assumptions on the coefficients considered in this paper are significantly weaker.
The main contributions of this paper are summarized as follows:
\begin{itemize}
	\item The strong well-posedness of the solution to (\ref{MV}) is established under the condition that the drift and diffusion grow super-linearly w.r.t. the state variable, while the measure argument in drift is super-linear in the Wasserstein distance sense.
	\item The propagation of chaos theory in finite horizon is proposed to reveal that the interacting particle system can be used to approximate the non-interacting particle system.
	\item The propagation of chaos theory in infinite horizon is also achieved under the dissipative conditions, which means that the interacting particle system can converge to the associated non-interacting one as time tends to infinity.
\end{itemize}

The remainder of this paper is organized as follows. In Section \ref{secc2}, the main results are presented one by one. In Section \ref{secc3}, the strong well-posedness of (\ref{MV}) is addressed by constructing an Euler-like sequence, which is stated as Theorem \ref{thm1}.  Section \ref{secc4} is
devoted to finishing the proof of Theorem \ref{keythm}.
Section \ref{secc5} aims to complete the proof of Theorem \ref{longpoc}. 
In Section 6, numerical simulations are conducted to verify the theories about the propagation of chaos in finite and infinite horizons.
In the Appendix, we provide the proof of exponential integrability of the solution and the verification of assumptions.

\section{Main results} \label{secc2}

This section presents the main results of this paper, including strong well-posedness and propagation of chaos in finite and infinite horizons. Inspired by the conditions of coefficients in the two examples from Section \ref{secc1}, we impose the following assumptions on  (\ref{MV}).

\begin{ass}\label{ass1}
	For any $R>0$, there exist constants $q,\gamma\geq2$ and $L_{1}>0$ such that for any $x,\bar{x}\in\mathbb{R}^d$ with $|x|\vee|\bar{x}|\leq R$ and $\mu$, $\nu\in\mathcal{P}_{\gamma}(\mathbb{R}^d)$,
	\begin{equation*} 
		\begin{split}
			&2	\langle x-\bar{x},b(x,\mu)-b(\bar{x},\nu)\rangle+(q-1)\|\sigma(x,\mu)-\sigma(\bar{x},\nu)	\|^2\\
			\leq& \big(L(R)+L_{1}\|\mu\|_\gamma^\gamma+L_{1}\|\nu\|_\gamma^\gamma\big)\big(|x-\bar{x}|^2+\mathbb{W}_{2}^2(\mu,\nu)\big),
		\end{split}
	\end{equation*} 
	where $L:[0,\infty)\rightarrow[0,\infty)$ is a function satisfying $L(R)<\infty$.
	%for any $x$, $\bar {x}\in\mathbb{R}^d$ and $\mu\in\mathcal{P}_2(\mathbb{R}^d)$.
\end{ass}

\begin{ass}\label{ass4}
	There exist constants $L_{2}>0$ and $l_{1},l_{2}\geq 1$ such that
	\begin{equation*} 
		|b(x,\mu)-b(\bar{x},\nu)|\leq L_{2}(1+|x|^{l_{1}}+|\bar{x}|^{l_{1}}+\|\mu\|_{l_{1}}^{l_{1}}+\|\nu\|_{l_{1}}^{l_{1}})\left(|x-\bar{x}|+\mathbb{W}_2(\mu,\nu)\right),
	\end{equation*} 
	\begin{equation*} 
		\|\sigma(x,\mu)-\sigma(\bar{x},\mu)	\|\leq L_{2}(1+|x|^{l_{2}}+|\bar{x}|^{l_{2}})|x-\bar{x}|,
	\end{equation*} 
	\begin{equation*} 
		\|\sigma(x,\mu)-\sigma(x,\nu)	\|\leq L_{2}\mathbb{W}_2(\mu,\nu),
	\end{equation*} 
	for any $x,\bar{x}\in\mathbb{R}^d$ and $\mu$, $\nu\in\mathcal{P}_{l_{1}\vee 2}(\mathbb{R}^d)$.
\end{ass}
\begin{ass}\label{ass3}
	There exist constants $p\geq2$ and $L_3>0$ such that
	\begin{equation*} 
		2\langle x,b(x,\mu)
		\rangle+(p-1)\|\sigma(x,\mu)\|^2\leq L_{3}\big(1+|x|^2+\|\mu\|_2^2\big),
	\end{equation*} 
	for any $x\in\mathbb{R}^d$ and $\mu\in\mathcal{P}_2(\mathbb{R}^d)$.
\end{ass}

\begin{ass}\label{ass2}
	There exist constants $L_{4}>0$ and $l_{3}, l_{4}\geq1$ such that
	\begin{equation*} 
		|b(x,\mu)|\leq L_{4}(1+|x|^{l_{3}}+\|\mu\|_{l_{3}}^{l_{3}}),
	\end{equation*} 
	\begin{equation*} 
		\|\sigma(x,\mu)\|\leq L_{4}(1+|x|^{l_{4}}+\|\mu\|_2),
	\end{equation*} 
	for any $x\in\mathbb{R}^d$ and $\mu\in\mathcal{P}_{l_{3}\vee 2}(\mathbb{R}^d)$.
\end{ass}

Obviously, Assumption \ref{ass2} can be derived from Assumption \ref{ass4}, but we still state Assumption \ref{ass2} explicitly to simplify the notations.

\begin{ass}\label{ass6}
	For any $x\in\mathbb{R}^d$, $\mu\in\mathcal{P}_2(\mathbb{R}^d)$, and initial value $\xi\in\mathcal{F}_0$, there exist non-decreasing, measurable, and polynomial functions $f$, $\bar{f}$: $[0,\infty)\rightarrow[0,\infty)$ such that $\mathbb{E}[e^{f(|\xi|)}]<\infty$, and
	\begin{equation*} 
		\begin{split}
			&	\langle \nabla f(|x|),b(x,\mu)
			\rangle+\frac{1}{2}|\nabla f(|x|)\sigma(x,\mu)|^2+\frac{1}{2}\text{trace}\left({\sigma}^{T}(x,\mu)\nabla^2 f(|x|)\sigma(x,\mu)\right)\\\leq& \alpha f(|x|)+\beta \bar{f}\left(\|\mu\|_2^2\right),
		\end{split}
	\end{equation*} 
	and
	\begin{equation}\label{R}
		\lim_{R\rightarrow\infty}\big(f(R)-\kappa L(R)\big)=\infty,
	\end{equation}
	where $\alpha$, $\beta$, and $\kappa$ are some positive constants.
	Here, for $i,j=1,\cdots,d$,
	$$  \nabla f(|x|) = \left( \frac{\partial}{\partial x_1} f(|x|), \dots, \frac{\partial}{\partial x_d} f(|x|) \right),~
	\left[ \nabla^2 f(|x|) \right]_{ij} = \frac{\partial^2}{\partial x_{i} \partial x_{j} }f(|x|).
	$$

\end{ass}
\begin{rem}\label{rree01}
	The Assumption \ref{ass6} is employed to establish the exponential integrability of the solution to (\ref{MV}), which plays a crucial role in the analysis of the existence and uniqueness of the solution to (\ref{MV}) in Theorem \ref{thm1}. A detailed proof of the exponential integrability is provided in the Appendix. For further details on the exponential integrability property, we refer the reader to \cite{xiaojie}.
\end{rem}

\begin{rem}
	If the function $L(R)\leq C\log R$ for $ C>0$ in Assumption \ref{ass1}, the Assumption \ref{ass6} becomes redundant. However, when the function $L(R)$ grows faster than $\log R$, for example, $L(R)=R^a$ for  $a>0$, the Assumption \ref{ass6} is required to guarantee the exponential integrability of the solution.		
\end{rem}

\begin{rem}
	Compared with  \cite{hongliu}, the assumptions in this paper are more relaxed. First, the exponent of the state variable in drift $l_{3}$ is no longer constrained by $\gamma$ in Assumption \ref{ass1}, which means that
	the degree of super-linearity of the state variable in drift is less restrictive. Second, the state variable in diffusion is allowed to be super-linear.
	Third, the assumption of the exponential integrability in \cite{hongliu}  is difficult to verify directly, whereas our Assumption \ref{ass6} is more straightforward to check.
\end{rem}

\begin{rem}\label{numrem}
	The assumptions in this paper are relatively broad, which cover more types of MVSDEs than those in the existing literature such as \cite{dossup1,dossup2}. For instance, consider the one-dimensional MVSDE
	\begin{equation}\label{exm}
		\begin{split}
			dX_t=(-18X_t^5-X_t^\frac{1}{3}[\mathbb{E}X_t]^4+2)dt+(X_t^2+EX_t)dW_t,
		\end{split}
	\end{equation}		
	which fulfills Assumptions \ref{ass1}-\ref{ass6}, but is not included by any of the works discussed above. We provide the details of verifying Assumptions \ref{ass1}-\ref{ass6} in the Appendix.
\end{rem}

The following theorem establishes the strong well-posedness of (\ref{MV}) under the given assumptions.			
\begin{thm}\label{thm1}
	Assume that Assumptions \ref{ass1}-\ref{ass6} hold with $p \geq \max\{\gamma, 2 + 2l_3, 4l_4\}$. Then, for any $T>0$, the MVSDE (\ref{MV}) admits a unique strong solution $X_t$. Moreover,  for $2\leq\bar{p}\leq p/l_4$, it holds that
	\begin{equation*}
		\sup_{0\leq t\leq T}	\mathbb{E}|X_{t}|^p\leq C_{\xi,p,L_3,T}~~~\text{and}~~~	\mathbb{E}\big[\sup_{0\leq t\leq T}	|X_{t}|^{\bar{p}}\big]\leq C_{\xi,l_{4},L_3,T}.
	\end{equation*}
\end{thm}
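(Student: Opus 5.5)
We will construct the solution as the limit of an Euler-like (Picard-in-measure) sequence. Fix $T>0$, set $\Delta=T/n$, and let $\mu^{(0)}_t\equiv\mathcal{L}(\xi)$. Given a flow $\mu^{(n-1)}_\cdot$, let $X^{(n)}$ solve the classical (distribution-free) SDE
\begin{equation*}
dX^{(n)}_t=b\big(X^{(n)}_t,\mu^{(n-1)}_t\big)dt+\sigma\big(X^{(n)}_t,\mu^{(n-1)}_t\big)dW_t,\qquad X^{(n)}_0=\xi,
\end{equation*}
and set $\mu^{(n)}_t=\mathcal{L}(X^{(n)}_t)$. If needed, we freeze the measure on a time grid, using $\mu^{(n-1)}_{\lfloor t/\Delta\rfloor\Delta}$. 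For a fixed measure flow, Assumption \ref{ass1} gives a local monotonicity condition in $x$ and Assumption \ref{ass3} gives a Khasminskii-type coercivity condition. Hence each equation has a unique global strong solution by the standard localisation argument.

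\textbf{Step 1: uniform moment bounds.} We apply It\^o's formula to $|X^{(n)}_t|^p$ and use Assumption \ref{ass3}, which yields
\begin{equation*}
\mathbb{E}|X^{(n)}_t|^p\le C+C\int_0^t\big(\mathbb{E}|X^{(n)}_s|^p+\|\mu^{(n-1)}_s\|_2^p\big)ds.
\end{equation*}
Gronwall's inequality and induction on $n$ then give $\sup_n\sup_{t\le T}\mathbb{E}|X^{(n)}_t|^p\le C_{\xi,p,L_3,T}$. Uniformity in $n$ follows because the recursion $a_n\le C+C\int_0^t a_{n-1}$ is closed by the exponential bound.

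For the supremum bound, we use BDG on the stochastic integral, with $\|\sigma\|^{\bar p}\lesssim 1+|x|^{\bar pl_4}+\|\mu\|_2^{\bar p}$ from Assumption \ref{ass2}. Since $\bar p l_4\le p$, this is controlled by the $p$-th moment. The drift term in the It\^o expansion of $|x|^{\bar p}$ is handled by Assumption \ref{ass3} directly, so no control of $|b|$ is needed. Together these give $\sup_n\mathbb{E}\sup_{t\le T}|X^{(n)}_t|^{\bar p}\le C$.

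\textbf{Step 2: exponential integrability.} Using Assumption \ref{ass6} and It\^o's formula for $e^{f(|X^{(n)}_t|)e^{-\alpha t}}$, as in the Appendix, we obtain $\sup_n\sup_{t\le T}\mathbb{E}\exp\big(e^{-\alpha t}f(|X^{(n)}_t|)\big)\le C$. The term $\bar f(\|\mu^{(n-1)}\|_2^2)$ is bounded by Step 1.

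\textbf{Step 3: Cauchy property.} Let $\tau_R=\inf\{t:|X^{(n)}_t|\vee|X^{(m)}_t|\ge R\}$ and $e_t=X^{(n)}_t-X^{(m)}_t$. Applying It\^o's formula to $|e_{t\wedge\tau_R}|^2$ and using Assumption \ref{ass1} with the moment bound $\|\mu\|_\gamma^\gamma\le C$ (since $p\ge\gamma$) gives
\begin{equation*}
\mathbb{E}|e_{t\wedge\tau_R}|^2\le C(1+L(R))\int_0^t\Big(\mathbb{E}|e_{s\wedge\tau_R}|^2+\mathbb{W}_2^2\big(\mu^{(n-1)}_s,\mu^{(m-1)}_s\big)\Big)ds.
\end{equation*}
The unlocalised error is bounded by
\begin{equation*}
\mathbb{E}\big[|e_t|^2\mathbf 1_{\tau_R\le t}\big]\le (\mathbb{E}|e_t|^4)^{1/2}\,\mathbb{P}(\tau_R\le T)^{1/2},
\end{equation*}
with $\mathbb{P}(\tau_R\le T)\lesssim e^{-e^{-\alpha T}f(R)}$ from Step 2 (applied to the running supremum, or via a stopped version of the same It\^o argument). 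Gronwall then produces the factor $e^{C(1+L(R))T}$, and condition (\ref{R}) ensures that $e^{CTL(R)}e^{-cf(R)}\to0$, choosing $\kappa$ compatibly. We also use $\mathbb{W}_2^2(\mu^{(n-1)}_s,\mu^{(m-1)}_s)\le\mathbb{E}|X^{(n-1)}_s-X^{(m-1)}_s|^2$. Iterating the resulting Volterra-type inequality, so that the $k$-th iterate carries a factor $(Ct)^k/k!$, shows that $\sup_{t\le T}\mathbb{E}|e_t|^2\to0$. If the time-grid freezing is used, Assumptions \ref{ass4} and \ref{ass2} together with the moment bounds control the discretisation error $\mathbb{E}|X_t-X_{\lfloor t/\Delta\rfloor\Delta}|^2\le C\Delta$.

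\textbf{Step 4: limit, uniqueness, estimates.} The limit $X$ exists in $L^2$, and $\mu^{(n)}\to\mu$ in $\mathbb{W}_2$ uniformly on $[0,T]$. We pass to the limit in the coefficients using Assumption \ref{ass4}, with its polynomial weights controlled by the uniform $p$-moments (since $p\ge 2+2l_3$), to see that $X$ solves (\ref{MV}). Uniqueness follows from the same localisation plus exponential integrability argument applied to two solutions. The moment estimates pass to the limit by Fatou's lemma.

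The main obstacle is Step 3: balancing the growth $e^{CL(R)T}$ from local monotonicity against the tail $\mathbb{P}(\tau_R\le T)$. Polynomial moments are insufficient for this, which is why the exponential integrability from Assumption \ref{ass6} and condition (\ref{R}) are essential. I would first try the cleaner route of bounding $\mathbb{E}\big[e^{-\int_0^t C(1+L(|X^{(n)}_s|\vee|X^{(m)}_s|))ds}|e_t|^2\big]$ directly, which avoids stopping times, and then remove the weight by H\"older's inequality and the exponential integrability estimate.
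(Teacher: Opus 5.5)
Your proposal reaches the result by a different route from the paper. It works, but the iteration in Step 3 needs repair.

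\textbf{Comparison of the two routes.} The paper never iterates in the measure. Its Euler-like scheme freezes, on each grid cell, the law of the \emph{same} process $X^{(m)}$ at the left grid point. When $X^{(m)}$ and $X^{(n)}$ are compared, $\mathbb{W}_2(\mu^{(m)}_{\lceil s\rceil_m},\mu^{(n)}_{\lceil s\rceil_n})$ splits into two parts:
\begin{itemize}
\item[(i)] the main term $\mathbb{W}_2(\mu^{(m)}_s,\mu^{(n)}_s)\le(\mathbb{E}|X^{(m)}_s-X^{(n)}_s|^2)^{1/2}$;
\item[(ii)] two discretisation terms, which are $O(\Delta^{1/2})$ by the increment bound coming from Assumption \ref{ass2} and the moment bounds.
\end{itemize}
So a single Gronwall closes the estimate, with amplification of order $e^{C(L(R)+C)T}$. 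The paper localises by splitting the sample space on the event that the processes exceed $R$, using Chebyshev and fixed-time exponential moments; it uses no stopping times.

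Your Picard scheme needs no increment estimate, so drop the grid freezing, which only mixes the two schemes. Your exit-time bound $\mathbb{P}(\tau_R\le T)\le Ce^{-e^{-\alpha T}f(R)}$ is correct. It comes from the stopped It\^o argument for $\exp(e^{-\alpha t}f(|X_t|))$, and it keeps the factor $e^{-\alpha T}$, which the paper's appendix loses.

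\textbf{Where Step 3 needs repair.} The cost of Picard is the recursion across levels, and Step 3 as written does not control it. Set $d_{n,m}(t):=\mathbb{E}|X^{(n)}_t-X^{(m)}_t|^2$, $K_R:=C(1+L(R))$ and $\varepsilon_R:=C\,\mathbb{P}(\tau_R\le T)^{1/2}$. Gronwall at level $n$ gives the convolution inequality
\[
d_{n,m}(t)\le \varepsilon_R+K_R\int_0^t e^{K_R(t-s)}\,d_{n-1,m-1}(s)\,ds .
\]
The iteration constant is necessarily $R$-dependent, because Assumption \ref{ass1} puts $L(R)$ in front of $\mathbb{W}_2^2$ as well.

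If you bound $e^{K_R(t-s)}\le e^{K_RT}$ and iterate with $(K_Re^{K_RT}t)^k/k!$, then $\varepsilon_R$ picks up a factor of order $\exp(K_RTe^{K_RT})$. This is doubly exponential in $L(R)$, and no tail $e^{-cf(R)}$ with polynomial $f$ can absorb it. It fails already for the paper's example, where $L(R)=R^{2/3}+2$ and $f(R)=R^2/4+1$.

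The fix is to keep the kernel. In the norm $\|g\|:=\sup_{t\le T}e^{-3K_Rt}g(t)$ the inequality becomes $\|d_{n,m}\|\le\varepsilon_R+\frac12\|d_{n-1,m-1}\|$. Hence
\[
\sup_{t\le T}d_{n,m}(t)\le e^{3K_RT}\big(2\varepsilon_R+C2^{-(n\wedge m)}\big).
\]
Now let $n,m\to\infty$ for fixed $R$, and only then let $R\to\infty$. The last limit requires $\frac12e^{-\alpha T}f(R)-3CTL(R)\to\infty$, i.e.\ (\ref{R}) with $\kappa$ large relative to $T$ and $\alpha$. The paper needs the same thing implicitly: it uses $\sup_t\mathbb{E}\exp(2q^*f(|X_t|))<\infty$, which Assumption \ref{ass6} does not provide.

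\textbf{Two smaller points.}
\begin{itemize}
\item[(a)] In Step 4, the weight in Assumption \ref{ass4} has exponent $l_1$, which $p\ge 2+2l_3$ does not control. Pass to the limit instead via continuity of $b,\sigma$ and uniform integrability from the growth bounds in Assumption \ref{ass2}.
\item[(b)] Your weighted-expectation alternative does not close. Removing the weight by H\"older leaves $d_{n,m}(t)\le C\big(\int_0^t d_{n-1,m-1}(s)\,ds\big)^{\theta}$ with $\theta<1$, and this sublinear recursion does not force $d_{n,m}\to0$. Stay with the localisation route.
\end{itemize}
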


For the purpose of approximating the original MVSDE, we consider the following stochastic system of $N$ interacting particles:
\begin{equation}\label{IPS}
	dX_{t}^{i,N}=b(X_{t}^{i,N},\mu_{t}^{X,N})dt+\sigma(X_{t}^{i,N},\mu_{t}^{X,N})dW_{t}^{i},~~ i\in\mathbb{S}_N:=\{1,2,\cdots,N\},
\end{equation}
with the initial value $\xi^i\in\mathbb{L}^p(\mathbb{R}^d)$. Here, $\mu_{t}^{X,N}:=\frac{1}{N}\sum_{j=1}^{N}\delta_{X^{j,N}_t}$ is the empirical measure of $(X^{i,N}_t)_{1\leq i\leq N}$ with $\delta_{X^{j,N}_t}$ being the Dirac measure of $X^{j,N}_t$, and $(W^i,\xi^i)$ are independent copies of $(W,\xi)$.   
%These particles interact with each other.
As the number of particles $N$ goes to infinity, the empirical measure $\mu_{t}^{X,N}$  converges to the law of any mutually independent particles $(X^{i}_t)_{1\leq i\leq N}$ which solves the non-interacting particle system:
\begin{equation}\label{nonIPS}
	dX_{t}^{i}=b(X_{t}^{i},\mathcal{L}(X_{t}^{i}))dt+\sigma(X_{t}^{i},\mathcal{L}(X_{t}^{i}))dW_{t}^{i}, 
\end{equation}
with the initial value $\xi^i\in\mathbb{L}^p(\mathbb{R}^d)$.
The following theorem shows the results of propagation of chaos in finite horizon.

\begin{thm}\label{keythm}
	Assume that Assumptions \ref{ass1}-\ref{ass6} hold with $p \geq \max\{\gamma, 2 + 2l_3, 4l_4\}$. Then, for $2\leq q\leq(\frac{p}{2}+1-l_3)\wedge (\frac{p}{2}+2-2l_4)$ and $2\leq\bar{q}\leq(p/l_2)\wedge q$, we have 
	\begin{equation}\label{A}
		\lim_{N\rightarrow\infty}\sup_{0\leq t\leq T}	\mathbb{E}|X_{t}^{i,N}-X_{t}^{i}|^{q}=0,
	\end{equation} 
	and
	\begin{equation}\label{B}
		\lim_{N\rightarrow\infty}\mathbb{E}\big[\sup_{0\leq t\leq T}|X_{t}^{i,N}-X_{t}^{ i}|^{\bar{q}}\big]=0.
	\end{equation} 
\end{thm}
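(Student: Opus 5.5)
Our plan is a localisation argument driven by the exponential integrability in Assumption \ref{ass6}, combined with the standard empirical-measure estimate for i.i.d.\ samples.

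\textbf{Step 1: a priori bounds.} By Theorem \ref{thm1}, the processes $X^i$ are i.i.d.\ copies of the solution $X$ of (\ref{MV}). Hence $\sup_{t\le T}\mathbb{E}|X^i_t|^p<\infty$ and $\mathbb{E}\sup_t|X^i_t|^{\bar p}<\infty$. For the particle system (\ref{IPS}) we repeat the It\^o computation from the proof of Theorem \ref{thm1} using Assumption \ref{ass3}. Since $\|\mu^{X,N}_t\|_2^2=\frac1N\sum_j|X^{j,N}_t|^2$, summing over $i$ and using exchangeability yields $\sup_{N}\sup_{t\le T}\mathbb{E}|X^{i,N}_t|^p\le C$, independent of $N$. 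Well-posedness of (\ref{IPS}) for fixed $N$ is a finite-dimensional SDE under the locally monotone and coercive conditions, so we take it as standard. We also need a uniform exponential moment $\sup_N\mathbb{E}\sup_{t\le T}e^{f(|X^{i,N}_t|)}$, or at least one for $e^{f(|X^i_t|)}$. We would obtain these by the Appendix argument applied to each particle, with $\bar f(\|\mu^{X,N}_t\|_2^2)$ controlled through the uniform polynomial moments.

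\textbf{Step 2: localisation.} Fix $R$ and set $\tau_R=\inf\{t:|X^{i,N}_t|\vee|X^i_t|\ge R\}$. Also introduce the auxiliary empirical measure $\mu^N_t=\frac1N\sum_j\delta_{X^j_t}$ of the non-interacting particles. Put $e_t=X^{i,N}_t-X^i_t$ and apply It\^o's formula to $|e_{t\wedge\tau_R}|^q$, splitting
\[
b(X^{i,N},\mu^{X,N})-b(X^i,\mathcal L(X^i_t))=[b(X^{i,N},\mu^{X,N})-b(X^i,\mu^N)]+[b(X^i,\mu^N)-b(X^i,\mathcal L(X^i))],
\]
and similarly for $\sigma$. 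On the first bracket we use Assumption \ref{ass1} with $\mathbb{W}_2^2(\mu^{X,N}_t,\mu^N_t)\le\frac1N\sum_j|e^j_t|^2$. The weight $L(R)+L_1\|\mu^{X,N}\|_\gamma^\gamma+L_1\|\mu^N\|_\gamma^\gamma$ is the delicate part. The measure parts are random, so we do not bound them pathwise; we keep them inside an exponential weight. That is, we apply It\^o to $e^{-\int_0^t\Lambda_s ds}|e_t|^q$ with $\Lambda_s=C(L(R)+\|\mu^{X,N}_s\|_\gamma^\gamma+\|\mu^N_s\|_\gamma^\gamma)$. The second bracket is controlled, via Assumption \ref{ass4}, Young's inequality and H\"older with the moment bounds, by $C\,\mathbb{E}[\mathbb{W}_2^{q}(\mu^N_t,\mathcal L(X_t))]^{\theta}$ for some $\theta>0$. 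This tends to $0$ by the Fournier--Guillin type convergence of i.i.d.\ empirical measures, since $p>q$ moments are available. The restrictions $q\le \frac p2+1-l_3$ and $q\le\frac p2+2-2l_4$ are exactly what H\"older needs to absorb the polynomial factors $|x|^{l_3}$ and $|x|^{2l_4}$ against $|e|^{q-1}$ and $|e|^{q-2}$. Exchangeability turns $\frac1N\sum_j\mathbb{E}|e^j|^q$ into $\mathbb{E}|e^i|^q$, and Gronwall then gives a bound of the form $\mathbb{E}[e^{-\int_0^{t\wedge\tau_R}\Lambda}|e_{t\wedge\tau_R}|^q]\le C_R\,\varepsilon_N$ with $\varepsilon_N\to0$.

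\textbf{Step 3: removing the weight and the stopping time.} This is the main obstacle. We write $\mathbb{E}|e_t|^q\le\mathbb{E}[|e_t|^q1_{\{\tau_R>T\}}]+\mathbb{E}[|e_t|^q1_{\{\tau_R\le T\}}]$. The second term is bounded by H\"older, using the uniform $p$-moments and $\mathbb{P}(\tau_R\le T)\le Ce^{-f(R)}$ from exponential integrability. For the first term we remove $e^{-\int\Lambda}$ by a H\"older splitting: $e^{\int\Lambda}$ costs at most $e^{CTL(R)}\times e^{C\int\|\mu\|_\gamma^\gamma}$. The factor $e^{CTL(R)}$ is balanced against $e^{-f(R)}$ through (\ref{R}), choosing $\kappa$-compatible constants so that $e^{CTL(R)}e^{-f(R)/2}\to0$. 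For the empirical-measure factor we would try to truncate also on the event $\{\sup_t\|\mu^{X,N}_t\|_\gamma^\gamma\vee\|\mu^N_t\|_\gamma^\gamma\le K\}$, whose complement has probability $O(K^{-1}\cdot\text{moments})$, uniformly in $N$ by Step 1 and Doob/BDG.

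\textbf{Step 4: conclusion.} Letting first $N\to\infty$ with $R,K$ fixed, and then $R,K\to\infty$, yields (\ref{A}). For (\ref{B}) we return to the equation for $e_t$, take $\sup_t$, and apply BDG to the martingale term with exponent $\bar q$. The constraint $\bar q\le p/l_2$ handles the factor $(1+|X^{i,N}|^{l_2}+|X^i|^{l_2})$ from the diffusion in Assumption \ref{ass4}. The resulting bound is in terms of $\sup_t\mathbb{E}|e_t|^{q}$ and the empirical-measure error, both of which vanish by (\ref{A}), together with a localisation as above.
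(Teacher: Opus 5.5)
\textbf{Step 2 does not close as written.} Your stopping time $\tau_R$ localises only particle $i$. The mean-field term from Assumption \ref{ass1} with $\mathbb{W}_2^2(\mu^{X,N}_s,\mu^N_s)\le\frac1N\sum_j|e^j_s|^2$ becomes, after Young,
$\mathbb{E}\big[\mathbf{1}_{\{s<\tau_R\}}e^{-\int_0^s\Lambda_u du}\,\Lambda'_s\,\frac1N\sum_j|e^j_s|^q\big]$, where $\Lambda'_s=L(R)+L_1\|\mu^{X,N}_s\|_\gamma^\gamma+L_1\|\mu^N_s\|_\gamma^\gamma$ is random.

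\begin{itemize}
\item Exchangeability cannot replace $|e^j_s|^q$ by $|e^i_s|^q$ here, because swapping $i\leftrightarrow j$ also changes $\tau_R$.
\item The weight absorbs only the diagonal term, so there is nothing left for Gronwall to act on.
\item The only way to use symmetry is to drop the indicator, which is legitimate since the right side of Assumption \ref{ass1} is nonnegative. This is what the paper does.
\item But then the cross term is the \emph{unstopped} quantity $\mathbb{E}[e^{-\int_0^s\Lambda}\Lambda'_s|e^i_s|^q]$. Its excess over the stopped diagonal term is $\mathbb{E}[\mathbf{1}_{\{s\ge\tau_R\}}\Lambda'_s|e^i_s|^q]$, which is of order $(1+L(R))\,\mathbb{P}(\tau_R\le T)^{1/2}$ and does not vanish as $N\to\infty$.
\end{itemize}

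So Step 2 should conclude $C_R\varepsilon_N+C(1+L(R))e^{-cf(R)}$, not $C_R\varepsilon_N$. It is this second piece, multiplied by $e^{C(TL(R)+K)}$ when you remove the weight, that must be beaten through (\ref{R}).

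That is exactly the paper's mechanism. It never stops the process. It splits the integrand pointwise into $\{|X^{i,N}_s|\vee|X^i_s|\le R\}$ and its complement. It runs Gronwall on the unstopped $\mathbb{E}|e^i_t|^q$ with constant $e^{(5q/2-2)L(R)}$. After $N\to\infty$, it balances that constant against $\mathbb{P}(\Omega^i(R))^{1/2}$.

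This also explains the constraints on $q$. In the paper, $q\le\frac p2+1-l_3$ and $q\le\frac p2+2-2l_4$ come precisely from this bad-set term: H\"older applied to $(|X^{i,N}_s|+|X^i_s|)^{q-1}(1+|X|^{l_3})$ and $(\cdot)^{q-2}(1+|X|^{2l_4})$ against $\mathbb{P}^{1/2}$. Your stopped computation has no place for them.

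Your Step 3 attaches the balancing to the $\{\tau_R>T\}$ term, which by your own Step 2 carries no $e^{-f(R)}$ factor, so the argument is internally inconsistent. The limits must be taken in the order $N\to\infty$, then $R\to\infty$ at fixed $K$, then $K\to\infty$.

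\textbf{Two smaller problems.}
\begin{itemize}
\item Splitting off $b(X^i,\mu^N)-b(X^i,\mathcal L(X^i_t))$ and bounding it by Assumption \ref{ass4} brings in $|X^i|^{l_1}$ and $\|\mathcal L(X_t)\|_{l_1}^{l_1}$. The hypothesis $p\ge\max\{\gamma,2+2l_3,4l_4\}$ says nothing about $l_1$, and Assumption \ref{ass4} does not even apply unless $\mathcal L(X_t)\in\mathcal P_{l_1}$. The paper avoids this by applying Assumption \ref{ass1} to the whole pair with $\nu=\mathcal L(X^i_t)\in\mathcal P_\gamma$, then using $\mathbb{W}_2(\mu^{X,N}_s,\mathcal L(X^i_s))\le\mathbb{W}_2(\mu^{X,N}_s,\mu^N_s)+\mathbb{W}_2(\mu^N_s,\mathcal L(X^i_s))$.
\item Removing $e^{\int\Lambda}$ by H\"older would need exponential moments of $\int_0^T\|\mu^{X,N}_s\|_\gamma^\gamma ds$, which you do not have. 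Only your $K$-truncation (Markov, with $\gamma\le p$) is viable.
\end{itemize}

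Your exponential weight is nonetheless the right instinct for a point the paper treats loosely: the paper simply replaces the random $\|\mu^{X,N}_s\|_\gamma^\gamma$ by $\mathbb{E}|X^{i,N}_s|^\gamma$. If you keep the weight symmetric in the particles and use the paper's unstopped pointwise localisation, the cross term is absorbed exactly by exchangeability once $\Lambda$ is a large enough multiple of $\Lambda'$.
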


%	\subsection{Popagation of chaos in infinite horizon}
In the preceding parts, we focused mainly on the well-posedness and propagation of chaos in finite horizon. We now move forward to study the 	propagation of chaos in infinite horizon
by imposing the following conditions:

\begin{ass}\label{assinf2}
	There exist constants $p\geq2$ and $L_5>L_6>0$ such that
	\begin{equation*} 
		2\langle x,b(x,\mu)
		\rangle+(p-1)\|\sigma(x,\mu)\|^2\leq -L_{5}|x|^2+L_6\|\mu\|_2^2,
	\end{equation*} 
	for any $x\in\mathbb{R}^d$ and $\mu\in\mathcal{P}_2(\mathbb{R}^d)$.
\end{ass}
\begin{ass}\label{assinf1}
	For any $R>0$, there exist positive constants $\gamma\geq2$ and $L_{1}$ such that, for any $x,\bar{x}\in\mathbb{R}^d$ with $|x|\vee|\bar{x}|\leq R$ and $\mu$, $\nu\in\mathcal{P}_{\gamma}(\mathbb{R}^d)$,
	\begin{equation*} 
		\begin{split}
			&2	\langle x-\bar{x},b(x,\mu)-b(\bar{x},\nu)\rangle+\|\sigma(x,\mu)-\sigma(\bar{x},\nu)	\|^2\\
			\leq&-h(R)|x-\bar{x}|^2+ \big(g(R)+L_{1}\|\mu\|_\gamma^\gamma+L_{1}\|\nu\|_\gamma^\gamma\big)\big(|x-\bar{x}|^2+\mathbb{W}_{2}^2(\mu,\nu)\big),
		\end{split}
	\end{equation*} 
	where the functions $h:[0,\infty)\rightarrow[0,\infty)$ and $g:[0,\infty)\rightarrow[0,\infty)$ %with $h(R)<\infty, g(R)<\infty$ 
	satisfy 
	\begin{equation} \label{3gR}
		\begin{split}
			h(R)-3g(R)>\left(\gamma/2+1\right)(L_5-L_6),
		\end{split}
	\end{equation} 
	for any $R>0$, and there exists $\lambda>3$ such that
	\begin{equation} \label{3gRr}
		\begin{split}
			\lim_{R\rightarrow\infty}\frac{h(R)}{g(R)}=\lambda,~~~~\lim_{R\rightarrow\infty}g(R)=\infty.
		\end{split}
	\end{equation} 
\end{ass}

The propagation of chaos in infinite horizon is stated as the following theorem.	
\begin{thm}\label{longpoc}
	Assume that Assumptions \ref{ass2}, \ref{assinf2}, \ref{assinf1} hold. Then, we have
	\begin{equation*}
		\lim_{t\rightarrow\infty}	\lim_{N\rightarrow\infty}\frac{1}{N}\sum_{i=1}^{N}\mathbb{E}| X_{t}^{i}-X_{t}^{i,N}|^2=0.
	\end{equation*}
\end{thm}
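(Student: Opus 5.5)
We plan to compare $X^{i,N}$ and $X^i$ directly with Itô's formula applied to $e^{\theta t}|X^i_t-X^{i,N}_t|^2$, localised by stopping times, and to use the empirical measure of the non-interacting particles as an intermediate object. The steps below are ordered as they would be carried out.

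\textbf{Step 1: uniform-in-time moment bounds.} From Assumption \ref{assinf2}, Itô's formula for $|X^i_t|^p$ (and, by exchangeability, for $\frac1N\sum_j|X^{j,N}_t|^p$) gives a differential inequality. Using $\|\mu\|_2^2\le (\mathbb{E}|X|^p)^{2/p}$ together with Young's inequality, it reads
\[
\frac{d}{dt}\mathbb{E}|X_t|^p\le -\tfrac p2(L_5-L_6)\mathbb{E}|X_t|^p+C .
\]
Since $L_5>L_6$, this yields
\[
\sup_{t\ge0}\mathbb{E}|X^i_t|^p+\sup_{N}\sup_{t\ge 0}\mathbb{E}|X^{i,N}_t|^p<\infty .
\]
In particular $\sup_t\|\mu_t\|_\gamma^\gamma$ and $\sup_t\mathbb{E}\|\mu^{X,N}_t\|_\gamma^\gamma$ are finite, provided $p$ is large relative to $\gamma$ (implicitly assumed, as in Theorem \ref{thm1}).

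\textbf{Step 2: local coupling estimate.} Set $e_t=X^i_t-X^{i,N}_t$ and introduce the empirical measure $\mu^N_t:=\frac1N\sum_j\delta_{X^j_t}$ of the independent copies. Let $\tau_R$ be the first exit time of $X^i$ and $X^{i,N}$ from the ball of radius $R$. On $[0,\tau_R]$, Assumption \ref{assinf1} gives, after averaging over $i$ and using
\[
\mathbb{W}_2^2(\mathcal{L}(X^i_t),\mu^{X,N}_t)\le 2\mathbb{W}_2^2(\mathcal{L}(X^i_t),\mu^N_t)+\tfrac2N\sum_j|e^j_t|^2 ,
\]
the bound
\[
\frac{d}{dt}\frac1N\sum_i\mathbb{E}|e^i_{t\wedge\tau_R}|^2\le -(h(R)-3g(R)-3L_1M_\gamma)\frac1N\sum_i\mathbb{E}|e^i|^2+2(g(R)+2L_1M_\gamma)\mathbb{E}\mathbb{W}_2^2(\mu_t,\mu^N_t).
\]
Here $M_\gamma$ denotes the uniform $\gamma$-moment bound from Step 1. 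The measure-moment factors $L_1\|\mu\|_\gamma^\gamma$ multiply $|e|^2$ and must be decoupled, either by Hölder's inequality or by absorbing them on the set where the empirical $\gamma$-moment is at most $K$. Conditions \eqref{3gR} and \eqref{3gRr} are designed so that the net rate $\eta(R)>0$ stays positive; this is where the factor $(\gamma/2+1)(L_5-L_6)$ enters, through the exponential weight $e^{\theta t}$ with $\theta$ tied to the moment decay rate. Grönwall's inequality then gives
\[
\frac1N\sum_i\mathbb{E}|e^i_t|^2\lesssim \frac{g(R)}{\eta(R)}\sup_{s}\mathbb{E}\mathbb{W}_2^2(\mu_s,\mu^N_s)+\text{(tail terms on }\{\tau_R\le t\}).
\]
The ratio $g(R)/\eta(R)$ stays bounded as $R\to\infty$ because $h/g\to\lambda>3$.

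\textbf{Step 3: tail terms and limits.} The tail terms are controlled by Hölder's inequality and the uniform $p$-moments: $\mathbb{P}(\sup_{s\le t}|X_s|>R)\le C_t/R^{\bar p}$, with the sup-moment obtained on finite intervals via Burkholder--Davis--Gundy and Assumption \ref{ass2}. By the Fournier--Guillin rate under uniform $p$-moments, $\mathbb{E}\mathbb{W}_2^2(\mu_s,\mu^N_s)\to0$ uniformly in $s$. Taking $N\to\infty$ first and then $R\to\infty$ kills the interaction term. What remains of the initial-time and tail contributions is damped by the factor $e^{-\eta t}$ and vanishes as $t\to\infty$, which gives the statement.

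\textbf{Main obstacle.} The hardest part is the interplay between localisation and long time: the tail terms on $\{\tau_R\le t\}$ grow with $t$, while the contraction rate depends on $R$. I would first try to avoid stopping times in the contraction step altogether. The idea is to apply Assumption \ref{assinf1} pointwise with $R=|x|\vee|\bar x|$, which is random, and use $h(R)-3g(R)\ge(\gamma/2+1)(L_5-L_6)$, valid for every $R$. This yields a deterministic positive contraction rate, so the estimate becomes global in time, and the unbounded factor $g(R)$ in front of $\mathbb{W}_2^2(\mu,\mu^N)$ is handled by Cauchy--Schwarz against the uniform moment bounds.
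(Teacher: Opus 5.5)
As written, your argument does not close, and the fix you propose for the main obstacle would fail. In Step 3 the tail contributions are forcing terms in the Gr\"onwall (Duhamel) formula, not initial data. There is no initial contribution at all, since $X^i_0=X^{i,N}_0=\xi^i$. So the tails are not damped by $e^{-\eta t}$: with $\tau_R$ defined through $\sup_{s\le t}$, they grow with $t$ at fixed $R$ and do not vanish as $t\to\infty$. The fallback with the random radius $R^i_t=|X^i_t|\vee|X^{i,N}_t|$ does not yield a deterministic contraction rate. After averaging over $i$, the interaction term is $\bigl(\frac1N\sum_i g(R^i_t)\bigr)\cdot\frac2N\sum_j|e^j_t|^2$, a product of averages. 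This cannot be absorbed by $\frac1N\sum_i\bigl(h(R^i_t)-g(R^i_t)\bigr)|e^i_t|^2$: a single particle far out, with a small error of its own, inflates the first factor and multiplies everyone else's error. The factor $3$ in \eqref{3gR} closes the estimate only for a common deterministic $R$. Moreover, no growth bound is imposed on $g$, so $\mathbb{E}[g(R^i_t)\,\cdot\,]$ cannot be handled by Cauchy--Schwarz against polynomial moments.

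The obstacle is actually absent for the stated iterated limit, because $t$ is frozen while $N\to\infty$. For fixed $t$ your Step 2 bound holds for every $R$, and by \eqref{3gRr} the tail terms enter multiplied by $(g(R)+h(R))/\eta(R)=O(1)$. Let $N\to\infty$, then $R\to\infty$ at the same $t$. This gives $\lim_N\frac1N\sum_i\mathbb{E}|e^i_t|^2=0$ for every $t$, and the outer limit is then trivial.

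A cleaner route is to localize at a fixed time on $\{|X^i_t|\vee|X^{i,N}_t|\le R\}$, per particle, rather than with stopping times; Assumption \ref{assinf1} localizes only the state variables, so per-particle localization suffices. Your uniform-in-time marginal moments from Step 1 then make every tail term small uniformly in $t$, and you even get $\lim_N\sup_t$. You still have to carry two things explicitly. One is the correction $h(R)\mathbb{E}[|e^i_t|^2\mathbf{1}_{\{|X^i_t|\vee|X^{i,N}_t|>R\}}]$. The other is the random factor $\|\mu^{X,N}_t\|_\gamma^\gamma$, which you can handle, for example, by truncating it at level $\varepsilon g(R)$ with $3L_1\varepsilon<\lambda-3$.

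Repaired this way, your route differs genuinely from the paper's. The paper does not localize at all: it applies Assumption \ref{assinf1} with a fixed $R$ directly to the processes and replaces the empirical $\gamma$-moment by $\mathbb{E}|X^{i,N}_t|^\gamma$. It then uses the exponential decay of Lemma \ref{expo} instead of uniform bounds: $L_1\mathbb{E}|X_t|^\gamma\le Ce^{-at}$ with $a=\gamma(L_5-L_6)/2$, and the Fournier--Guillin term is at most $C\Phi(N)e^{-bt}$ with $b=L_5-L_6$. It solves the resulting linear inequality explicitly from $\theta(0)=0$. Condition \eqref{3gR} is precisely $h-3g>a+b$, which keeps the denominators positive, and letting $R\to\infty$ gives $\theta(t)\le C\Phi(N)e^{-bt}$, an explicit rate in $N$. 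Your version is only qualitative, but it accounts explicitly for the localization in Assumption \ref{assinf1} and needs only \eqref{3gRr}.
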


In the final part of this section, we prepare some conclusions in order to achieve the target of this paper. 
\begin{itemize}
	\item For any $\mu\in\mathcal{P}_{2}(\mathbb{R}^d)$, we have $	\mathbb{W}_2(\mu,\delta_{0})=	\|\mu\|_2$.
	\item For two random variables $X$, $Y$, their distributions are denoted by $\mu$ and $\nu$, respectively. For $p\geq 1$, it holds that
	\begin{equation}\label{W1}
		\mathbb{W}_p(\mu,\nu)\leq\left(  \mathbb{E}\big|X-Y\big|^p\right)^\frac{1}{p}.
	\end{equation}
	\item For the empirical measure  $\tilde{\mu}_{t}^{X,N}:=\frac{1}{N}\sum_{j=1}^{N}\delta_{X^{j}_t}$ constructed by the independent identically distributed samples of process $X_{t}$, for $p\geq 1$, there exists 
	\begin{equation}\label{W3}
		\mathbb{W}_p(\tilde{\mu}_{t}^{X,N},\tilde{\mu}_{t}^{Y,N})\leq	\Big(\frac{1}{N}\sum_{j=1}^{N}|X^{j}_t-Y^{j}_t|^p\Big)^\frac{1}{p}.
	\end{equation}
	\item (Differential version of the Gronwall lemma) Assume that $\theta\in C((0,\infty);R)$ satisfies the differential inequality
	\begin{equation*}
		\theta'(t)\leq a(t)\theta(t)+b(t), 
	\end{equation*}
	for some $a(\cdot)$, $b(\cdot)\in L^{1}(0,\infty)$. Then, $\theta$ satisfies the pointwise estimate 
	\begin{equation}\label{ggron1}
		\theta(t)\leq e^{A(t)}\theta(0)+\int_{0}^{t}b(s)e^{A(t)-A(s)}ds, 
	\end{equation}
	where $A(t)$ is defined by
	$$A(t)=\int_{0}^{t}a(s)ds.$$
\end{itemize}

\section{Proof of Theorem \ref{thm1}}\label{secc3}

\begin{proof}
	The proof is divided into three steps. 
	
	\textbf{Step 1.}
	This step is to construct an Euler-like sequence $\{X_t^{(m)}\}_{m\geq1}$  and prove that it is well-posed. 
	For any integer $m\geq1$ and $T>0$, denote $\Delta_{m}=\frac{T}{m}$ and $t_{k}^{m}=k\Delta_{m}$, $k=0,1,\cdots,m$. The process $X_t^{(m)}$ is then defined sequentially on the time intervals $[0,t_{1}^{m}]$, $(t_{1}^{m},t_{2}^{m}]$, $\cdots$, $(t_{m-1}^{m},T]$. We begin by considering  the following classical SDE on the time interval $[0,t_{1}^{m}]$,
		\begin{equation}\label{MVeu}
			\begin{split}
				dX_{t}^{(m)}=b(X_{t}^{(m)},\mu_{0}^{(m)})dt+\sigma(X_{t}^{(m)},\mu_{0}^{(m)})dW_t,
			\end{split}
		\end{equation}
		%
		%According to (), (\ref{MVeu}) can be transformed into
		with the initial value $X_{0}^{(m)}=\xi$ and $\mu_{0}^{(m)}:=\mathcal{L}\mathcal(\xi)$. Under Assumptions \ref{ass1}-\ref{ass2}, the coefficients of (\ref{MVeu}) satisfy the conditions of $Theorem~3.1.1$ in \cite{liuwei}. Therefore, one can establish the existence and uniqueness of the solution to such equation with super-linear drift and diffusion coefficients on the interval $[0,t_{1}^{m}]$, and the moment boundedness of the solution to (\ref{MVeu}) can also be derived, which is
		\begin{equation*}
			\mathbb{E}\big[\sup_{0\leq t\leq t_{1}^{m}}	|X_{t}^{(m)}|^2\big]\leq C_{L_3,T}(1+\mathbb{E}|\xi|^2).
		\end{equation*}
		Since the proof is analogous to that of Lemma \ref{mbound} below, we omit it here.
		Inductively, we consider the classical SDE for $t\in(t_k^{m},t_{k+1}^{m}]$, $k=1,\cdots, m-1$,
		\begin{equation}\label{MVeeu}
			\begin{split}
				dX_{t}^{(m)}=b(X_{t}^{(m)},\mu_{t_k^{m}}^{(m)})dt+\sigma(X_{t}^{(m)},\mu_{t_k^{m}}^{(m)})dW_t,
			\end{split}
		\end{equation}
		with the initial value $X_{t_k^{m}}^{(m)}$ and $\mu_{t_k^{m}}^{(m)}:=\mathcal{L}(X_{t_k^{m}}^{(m)})$.
		In the same way, the existence and uniqueness of the solution to (\ref{MVeeu}) can also be obtained, and
		\begin{equation*}
			\mathbb{E}\big[\sup_{t_k^{m}\leq t\leq t_{k+1}^{m}}	|X_{t}^{(m)}|^2\big]\leq C_{L_3,T}(1+\mathbb{E}|X_{t_k^{m}}^{(m)}|^2)
		\end{equation*}
		holds. Denote $\lceil t\rceil_m=t_{k}^{m}$ for $t\in(t_k^{m},t_{k+1}^{m}]$. Then for any $t\in[0,T]$, we introduce the following SDE:
		\begin{equation}\label{MVu}
			\begin{split}
				dX_{t}^{(m)}=b(X_{t}^{(m)},\mu_{\lceil t\rceil_m}^{(m)})dt+\sigma(X_{t}^{(m)},\mu_{\lceil t\rceil_m}^{(m)})dW_t,
			\end{split}
		\end{equation}
		with the initial value $\xi$ and $\mu_{\lceil t\rceil_m}^{(m)}:=\mathcal{L}(X_{\lceil t\rceil_m}^{(m)})$.
		% and  initial value $X_{\lfloor t\rfloor_m}^{(m)}$. 
		%	The well-posedness of SDE (\ref{MVu}) can be obtained as well.
		The moment boundedness of the solution to SDE (\ref{MVu}) is obtained in Lemma \ref{mbound} below, and it shows that the bound is irrelevant to $m$. Subsequently, the well-posedness of the Euler-like sequence $\{X_t^{(m)}\}_{m\geq1}$ follows.

		\textbf{Step 2.} 
		This step reveals the existence of the solution to (\ref{MV}).
		We will demonstrate in Lemma \ref{cauchy} below that the sequence $\{X_t^{(m)}\}_{m\geq1}$ is a Cauchy sequence in $L^{q}(\Omega;C([0,T];\mathbb{R}^d))$ for some $q\geq2$, where $L^{q}(\Omega;C([0,T];\mathbb{R}^d))$ denotes the Banach space equipped with the norm $\|X\|_{L^q}:=\left(\mathbb{E}[\sup_{0\leq t\leq T}|X_t|^q]\right)^\frac{1}{q}$. Due to the completeness of $L^{q}(\Omega;C([0,T];\mathbb{R}^d))$, there exists a process $X_{t}\in L^{q}(\Omega;C([0,T];\mathbb{R}^d))$ which is an $\{\mathcal{F}_t\}$-adapted continuous process satisfying 
		\begin{equation*}
			\begin{split}
				&\lim_{m\rightarrow\infty}\mathbb{E}\big[\sup_{0\leq t\leq T}|X_{t}^{(m)}-X_{t}|^2\big]
				=0.
			\end{split}
		\end{equation*}
		Hence, the existence of the solution to (\ref{MV}) is proved.
		
		\textbf{Step 3.}
		This step is to address the uniqueness and moment boundedness of the solution to (\ref{MV}).
		Since the proof follows directly from the procedures of Lemmas \ref{mbound} and \ref{cauchy}, we omit the details.
	\end{proof}

	\begin{lem}\label{mbound}
		Let Assumptions \ref{ass1}-\ref{ass2} hold.\\
		%For any $T>0$, $p_0>\gamma\vee (2 l_{3}+2)\vee (4l_{4})$ and $2\leq\bar{p}_0\leq\frac{p_0}{l_{4}}$, 
		\text{(i)} For $2\leq p_0\leq p$, there exists a positive constant $C_{\xi,p_0,L_3,T}$ such that
		\begin{equation*}
			\sup_{0\leq t\leq T}	\mathbb{E}|X_{t}^{(m)}|^{p_0}\leq C_{\xi,p_0,L_3,T}.
		\end{equation*}
		\text{(ii)} For $2l_4\leq p_0\leq p$ and $2\leq\bar{p}_0\leq p_0/l_4$, there exists a positive constant $C_{\xi,l_{4},\bar{p}_0,L_3,L_4,T}$ such that
		\begin{equation*}
			\mathbb{E}\big[\sup_{0\leq t\leq T}	|X_{t}^{(m)}|^{\bar{p}_0}\big]\leq C_{\xi,l_{4},\bar{p}_0,L_3,L_4,T}.
		\end{equation*}
		Here, $C_{\xi,p_0,L_3,T}$ and $C_{\xi,l_{4},\bar{p}_0,L_3,L_4,T}$ are independent of $m$.
		%	with $\xi\in\mathcal{L}_{p_0}(\mathbb{R}^d)$.
	\end{lem}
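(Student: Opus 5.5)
For part (i) I will apply Itô's formula to $|X_t^{(m)}|^{p_0}$ for the Euler-like SDE (\ref{MVu}). Write $\mu_{\lceil t\rceil_m}^{(m)}=\mathcal{L}(X_{\lceil t\rceil_m}^{(m)})$. Itô's formula gives
\begin{equation*}
	d|X_t^{(m)}|^{p_0}\leq \frac{p_0}{2}|X_t^{(m)}|^{p_0-2}\Big(2\langle X_t^{(m)},b(X_t^{(m)},\mu_{\lceil t\rceil_m}^{(m)})\rangle+(p_0-1)\|\sigma(X_t^{(m)},\mu_{\lceil t\rceil_m}^{(m)})\|^2\Big)dt+dM_t,
\end{equation*}
where $M_t$ is a local martingale. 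Since $p_0\leq p$, Assumption \ref{ass3} bounds the bracket by $L_3(1+|X_t^{(m)}|^2+\|\mu_{\lceil t\rceil_m}^{(m)}\|_2^2)$.

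To handle the local martingale, I will localize with $\tau_R=\inf\{t:|X_t^{(m)}|\geq R\}$. Each piece of (\ref{MVeeu}) is well posed by the classical theory, so $\tau_R\to T$. Young's inequality then gives $|x|^{p_0-2}\|\mu\|_2^2\leq C(|x|^{p_0}+\|\mu\|_2^{p_0})$, and $\|\mu_{\lceil t\rceil_m}^{(m)}\|_2^{p_0}=(\mathbb{E}|X_{\lceil t\rceil_m}^{(m)}|^2)^{p_0/2}\leq \mathbb{E}|X^{(m)}_{\lceil t\rceil_m}|^{p_0}$. Taking expectations, letting $R\to\infty$ via Fatou, and setting $u(t)=\sup_{0\leq s\leq t}\mathbb{E}|X_s^{(m)}|^{p_0}$, I obtain $u(t)\leq \mathbb{E}|\xi|^{p_0}+C\int_0^t(1+u(s))ds$. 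The delayed measure is harmless here because $\lceil s\rceil_m\leq s$. Gronwall's lemma then yields a bound depending only on $\xi,p_0,L_3,T$ and not on $m$.

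One technical point is that $u$ must be known to be finite a priori for Gronwall to apply. I will get this by working on each interval $(t_k^m,t_{k+1}^m]$, where the measure is frozen and the equation is a classical SDE, and inducting over $k$. The constant on each interval depends on $m$, but that bound is only used to justify finiteness. The uniform constant comes from the global Gronwall argument.

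For part (ii) I will again use Itô's formula, now for $|X_t^{(m)}|^{\bar p_0}$, and take the supremum before taking expectations. The drift part is controlled as in (i) by Assumption \ref{ass3}, since $\bar p_0\leq p_0\leq p$. The martingale part is handled by the Burkholder–Davis–Gundy inequality:
\begin{equation*}
	\mathbb{E}\sup_{t\leq T}\Big|\int_0^t |X_s^{(m)}|^{\bar p_0-2}\langle X_s^{(m)},\sigma dW_s\rangle\Big|\leq C\,\mathbb{E}\Big(\int_0^T |X_s^{(m)}|^{2\bar p_0-2}\|\sigma\|^2ds\Big)^{1/2}.
\end{equation*}
This BDG term is the main obstacle, because $\sigma$ grows like $|x|^{l_4}$. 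I will not absorb it with the usual $\frac12\mathbb{E}\sup|X|^{\bar p_0}$ trick, since that would require a $\sup$ moment of higher order. Instead I will bound it directly by Jensen, $\big(\mathbb{E}\int_0^T|X_s^{(m)}|^{2\bar p_0-2}\|\sigma\|^2ds\big)^{1/2}$, and then use Assumption \ref{ass2}: $\|\sigma\|^2\leq C(1+|x|^{2l_4}+\|\mu\|_2^2)$. This reduces everything to $\sup_s\mathbb{E}|X_s^{(m)}|^{2\bar p_0-2+2l_4}$. Since $\bar p_0\leq p_0/l_4$ and $l_4\geq1$, I have $2\bar p_0-2+2l_4\leq 2\bar p_0 l_4\leq 2p_0$, which is not quite within the range of (i).

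So I will first try a sharper split. I will keep one factor $\sup|X|^{\bar p_0/2}$ and apply Young's inequality:
\begin{equation*}
	C\,\mathbb{E}\Big[\sup|X|^{\bar p_0/2}\Big(\int |X|^{\bar p_0-2}\|\sigma\|^2\Big)^{1/2}\Big]\leq \tfrac12\mathbb{E}\sup|X|^{\bar p_0}+C\,\mathbb{E}\int |X|^{\bar p_0-2}\|\sigma\|^2.
\end{equation*}
The last integrand is bounded by $C(1+|X|^{\bar p_0-2+2l_4}+\dots)$. Since $\bar p_0\geq2$ and $l_4\geq1$, we have $\bar p_0-2+2l_4\leq \bar p_0 l_4\leq p_0$, so part (i) controls it uniformly in $m$. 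The absorption step needs $\mathbb{E}\sup|X^{(m)}|^{\bar p_0}<\infty$, which I will justify again through the stopping times $\tau_R$. The result is a bound independent of $m$.
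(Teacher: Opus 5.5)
Your proof is correct and has the same skeleton as the paper's. For (i) it uses Itô's formula with Assumption \ref{ass3}, Young/Jensen for the frozen-measure term, and Gronwall. For (ii) it uses Itô's formula, BDG, absorption of $\frac{1}{2}\mathbb{E}\sup_{t\le T}|X_t^{(m)}|^{\bar p_0}$, and part (i). The one real difference is how you split the BDG term.

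The paper bounds $\big(\int_0^t|X_s^{(m)}|^{2\bar p_0-2}\|\sigma\|^2ds\big)^{1/2}$ by $\sup_s|X_s^{(m)}|^{\bar p_0-1}\big(\int_0^t\|\sigma\|^2ds\big)^{1/2}$. It then applies Young with exponents $\bar p_0/(\bar p_0-1)$ and $\bar p_0$, followed by Hölder in time, and ends with $\int_0^T\mathbb{E}\|\sigma\|^{\bar p_0}ds$. Via Assumption \ref{ass2} this requires $\sup_t\mathbb{E}|X_t^{(m)}|^{\bar p_0 l_4}$, which is exactly where the hypothesis $\bar p_0\le p_0/l_4$ enters.

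Your $(2,2)$ split, keeping $\sup|X^{(m)}|^{\bar p_0/2}$, instead leaves $\mathbb{E}\int_0^T|X_s^{(m)}|^{\bar p_0-2}\|\sigma\|^2ds$. This needs only moments of order $\bar p_0-2+2l_4$. Since $\bar p_0 l_4-(\bar p_0-2+2l_4)=(\bar p_0-2)(l_4-1)\ge 0$, this is covered by the stated hypothesis. Your argument would in fact go through under the weaker condition $\bar p_0\le p_0+2-2l_4$, which contains the range $[2,p_0/l_4]$ once $p_0\ge 2l_4$. So your split buys a slightly better moment budget; the paper's split buys nothing extra beyond brevity.

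One point in your write-up is confused and should be fixed. You first reject the absorption trick on the grounds that it ``would require a sup moment of higher order.'' That is not so. Both the paper's argument and your final one are absorption arguments, and they differ only in how the integrand is split before Young's inequality. The detour through Jensen is unnecessary and can be dropped.

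Your localization by $\tau_R$ adds rigor that the paper omits. So does your interval-by-interval check that $\sup_t\mathbb{E}|X_t^{(m)}|^{p_0}$ and $\mathbb{E}\sup_{t\le T}|X_{t\wedge\tau_R}^{(m)}|^{\bar p_0}$ are finite before you apply Gronwall and absorb. The paper simply takes expectations of the stochastic integrals and applies Gronwall without checking finiteness.
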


	%\subsection{Proof of boundedness}
	\begin{proof}
		For any $t\in[0,T]$, using It\^o's formula leads to 
		\begin{equation*}\label{pp}
			\begin{split}
				&|X_{t}^{(m)}|^{p_0}\\=&|X_{0}^{(m)}|^{p_0}+	\frac{p_0}{2}\int_{0}^{t}|X_{s}^{(m)}|^{{p_0}-2}\Big(2\langle X_{s}^{(m)},b(X_{s}^{(m)},\mu_{\lceil s\rceil_m}^{(m)})\rangle\\&+ ({p_0}-1)\| \sigma(X_{s}^{(m)},\mu_{\lceil s\rceil_m}^{(m)})\| ^2\Big)ds
				+{p_0}\int_{0}^{t}|X_{s}^{(m)}|^{{p_0}-2}\langle X_{s}^{(m)},\sigma(X_{s}^{(m)},\mu_{\lceil s\rceil_m}^{(m)})\rangle dW_s.\\
				%	=:&pQ_1(t)+\frac{p(p-1)}{2}Q_2(t)+pQ_3(t).
			\end{split}
		\end{equation*}
		By Assumption \ref{ass3}, Young's inequality, and H\"older's inequality, we obtain
		\begin{equation*}
			\begin{split}
				\mathbb{E}|X_{t}^{(m)}|^{p_0}
				\leq&\mathbb{E}|\xi|^{p_0}+	\frac{{p_0}L_3}{2}\mathbb{E}\Big[\int_{0}^{t}|X_{s}^{(m)}|^{{p_0}-2}\big( 1+|X_{s}^{(m)}|^2+\|\mu_{\lceil s\rceil_m}^{(m)}\|_{2}^2\big) ds\Big]\\
				\leq&\mathbb{E}|\xi|^{p_0}+L_3 \mathbb{E}\Big[\int_{0}^{t}\big( 1+(\frac{3p_0}{2}-2)|X_{s}^{(m)}|^{{p_0}}+\mathbb{E}|X_{\lceil s\rceil_m}^{(m)}|^{{p_0}}\big) ds\Big].\\
			\end{split}
		\end{equation*}
		Therefore, for any $t\in[0,T]$,
		\begin{equation*}
			\begin{split}
				\sup_{0\leq t\leq T}\mathbb{E}|X_{t}^{(m)}|^{p_0}\leq&\mathbb{E}|\xi|^{p_0}+L_3 T+(\frac{3p_0}{2}-1)L_3 \int_{0}^{T}\sup_{0\leq s\leq t}\mathbb{E}|X_{s}^{(m)}|^{{p_0}}dt	.
			\end{split}
		\end{equation*}
		By using the Gronwall inequality, we see that
		\begin{equation}\label{supE}
			\begin{split}
				\sup_{0\leq t\leq T}\mathbb{E}|X_{t}^{(m)}|^{p_0}\leq&(\mathbb{E}|\xi|^{p_0}+L_3 T)\exp\left((\frac{3p_0}{2}-1)L_3 T\right).
			\end{split}
		\end{equation}
		In what follows, by applying BDG's inequality, Young's inequality, and Assumptions \ref{ass3}-\ref{ass2}, we arrive at
		\begin{equation*}
			\begin{split}
				&\mathbb{E}\big[\sup_{0\leq s\leq t}|X_{s}^{(m)}|^{\bar{p}_0}\big]\\
				\leq&\mathbb{E}|\xi|^{\bar{p}_0}+	\frac{{\bar{p}_0}L_3}{2}\mathbb{E}\Big[\int_{0}^{t}|X_{s}^{(m)}|^{{\bar{p}_0}-2}\big( 1+|X_{s}^{(m)}|^2+\|\mu_{\lceil s\rceil_m}^{(m)}\|_{2}^2\big) ds\Big]\\&+{\bar{p}_0}4\sqrt{2}\mathbb{E}\Big(\int_{0}^{t}|X_{s}^{(m)}|^{2{\bar{p}_0}-2}\|\sigma(X_{s}^{(m)},\mu_{\lceil s\rceil_m}^{(m)})\|^2ds\Big)^\frac{1}{2}\\
				\leq&\mathbb{E}|\xi|^{\bar{p}_0}+L_3 \mathbb{E}\Big[\int_{0}^{t}\big( 1+(\frac{{3\bar{p}_0}}{2}-2)|X_{s}^{(m)}|^{{\bar{p}_0}}+\mathbb{E}|X_{\lceil s\rceil_m}^{(m)}|^{\bar{p}_0}\big) ds\Big]\\&+\frac{1}{2}\mathbb{E}\big[\sup_{0\leq s\leq t}|X_{s}^{(m)}|^{\bar{p}_0}\big]+K_{\bar{p}_0,T}\mathbb{E}\Big[\int_{0}^{t}\|\sigma(X_{s}^{(m)},\mu_{\lceil s\rceil_m}^{(m)}\|^{\bar{p}_0} ds\Big]\\
				%			\end{split}
			%		\end{equation*}
		%		\begin{equation*}
			%			\begin{split}
				\leq&\mathbb{E}|\xi|^{\bar{p}_0}+L_3 \mathbb{E}\Big[\int_{0}^{t}\big( 1+(\frac{{3\bar{p}_0}}{2}-2)|X_{s}^{(m)}|^{{\bar{p}_0}}+\mathbb{E}|X_{\lceil s\rceil_m}^{(m)}|^{\bar{p}_0}\big) ds\Big]\\
				&+
				\frac{1}{2}\mathbb{E}\big[\sup_{0\leq s\leq t}|X_{s}^{(m)}|^{\bar{p}_0}\big]+{K}_{{\bar{p}_0},T}3^{\bar{p}_0-1}L_4^{\bar{p}_0}\mathbb{E}\Big[\int_{0}^{t}\big( 1+|X_{s}^{(m)}|^{{\bar{p}_0}l_{4}}+\mathbb{E}|X_{\lceil s\rceil_m}^{(m)}|^{\bar{p}_0}\big) ds\Big],\\
			\end{split}
		\end{equation*}
		with $K_{\bar{p}_0,T}:=4\sqrt{2}\big(8\sqrt{2}({\bar{p}_0}-1)\big)^{{\bar{p}_0}-1}T^\frac{{\bar{p}_0}-2}{2}$. Then,
		\begin{equation*}
			\begin{split}
				&\mathbb{E}\big[\sup_{0\leq t\leq T}|X_{t}^{(m)}|^{\bar{p}_0}\big]\\
				\leq&2\mathbb{E}|\xi|^{\bar{p}_0}+	2(L_3 +{K}_{{\bar{p}_0},T}3^{\bar{p}_0-1}L_4^{\bar{p_0}})T\\&+((3\bar{p}_0-3)L_3 +2{K}_{{\bar{p}_0},T}3^{\bar{p}_0-1}L_4^{\bar{p}_0})\int_{0}^{T}\mathbb{E}\big[\sup_{0\leq s\leq t}|X_{s}^{(m)}|^{\bar{p}_0}\big] dt\\&+2{K}_{{\bar{p}_0},T}3^{\bar{p}_0-1}L_4^{\bar{p}_0}\int_{0}^{T}\mathbb{E}|X_{t}^{(m)}|^{\bar{p}_0l_{4}}dt.
			\end{split}
		\end{equation*}
		By applying the Gronwall inequality and (\ref{supE}), one has
		\begin{equation*}\label{Esup}
			\begin{split}
				&\mathbb{E}\big[\sup_{0\leq t\leq T}|X_{t}^{(m)}|^{\bar{p}_0}\big]\\
				\leq&\big(2\mathbb{E}|\xi|^{\bar{p}_0}+	2(L_3 +{K}_{{\bar{p}_0},T}3^{\bar{p}_0-1}L_4^{\bar{p}_0})T+2{K}_{{\bar{p}_0},T}3^{\bar{p}_0-1}L_4^{\bar{p}_0}T\sup_{0\leq t\leq T}\mathbb{E}|X_{t}^{(m)}|^{\bar{p}_0l_{4}}\big)\\&\cdot\exp\left((3\bar{p}_0-3)L_3 +2{K}_{{\bar{p}_0},T}3^{\bar{p}_0-1}L_4^{\bar{p}_0}\right),
			\end{split}
		\end{equation*}
		as required.
	\end{proof}
	
	\begin{lem}\label{cauchy}
		Assume that Assumptions \ref{ass1}-\ref{ass6} hold with $p \geq \max\{\gamma, 2 + 2l_3, 4l_4\}$. For $2\leq q\leq(\frac{p}{2}+1-l_3)\wedge (\frac{p}{2}+2-2l_4)$ and $2\leq\bar{q}\leq(p/l_2)\wedge q$, we have
		\begin{equation*}
			\begin{split}
				&\sup_{0\leq t\leq T}\mathbb{E}\big[|X_{t}^{(m)}-X_{t}^{ (n)}|^{q}\big]
				\rightarrow0,
			\end{split}
		\end{equation*}
		and
		\begin{equation*}
			\begin{split}
				&\mathbb{E}\big[\sup_{0\leq t\leq T}|X_{t}^{(m)}-X_{t}^{ (n)}|^{\bar{q}}\big]
				\rightarrow0,
			\end{split}
		\end{equation*}
		as $m,n\rightarrow\infty$. %$\xi\in\mathcal{L}_{q}(\mathbb{R}^d)$.
	\end{lem}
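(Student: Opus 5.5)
The approach is a stopping-time localization combined with the local monotonicity of Assumption \ref{ass1} and the exponential integrability from Assumption \ref{ass6}. Fix $R>0$ and let $\tau_R^{m}=\inf\{t\geq0:|X_t^{(m)}|\geq R\}$, $\tau_R^{n}$ likewise, and set $\tau_R=\tau_R^m\wedge\tau_R^n$. Put $e_t=X_t^{(m)}-X_t^{(n)}$. Applying It\^o's formula to $|e_{t\wedge\tau_R}|^{q}$ gives the drift term
\[
\tfrac{q}{2}|e_s|^{q-2}\Big(2\langle e_s,b(X_s^{(m)},\mu^{(m)}_{\lceil s\rceil_m})-b(X_s^{(n)},\mu^{(n)}_{\lceil s\rceil_n})\rangle+(q-1)\|\sigma(\cdot)-\sigma(\cdot)\|^2\Big).
\]
For $s<\tau_R$, Assumption \ref{ass1} bounds this by
\[
\tfrac q2|e_s|^{q-2}\big(L(R)+L_1\|\mu^{(m)}_{\lceil s\rceil_m}\|_\gamma^\gamma+L_1\|\mu^{(n)}_{\lceil s\rceil_n}\|_\gamma^\gamma\big)\big(|e_s|^2+\mathbb{W}_2^2(\mu^{(m)}_{\lceil s\rceil_m},\mu^{(n)}_{\lceil s\rceil_n})\big).
\]
By Lemma \ref{mbound}(i) with $p_0=\gamma\leq p$, the measure moments are bounded uniformly in $m$, so the prefactor is at most $L(R)+C$.

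The Wasserstein term is split into three pieces: the gap between $\mu^{(m)}_{\lceil s\rceil_m}$ and $\mu^{(m)}_s$, the gap between $\mu^{(m)}_s$ and $\mu^{(n)}_s$, and the gap between $\mu^{(n)}_s$ and $\mu^{(n)}_{\lceil s\rceil_n}$. The outer two are time-discretization errors. By (\ref{W1}) each is controlled by $\mathbb{E}|X^{(m)}_s-X^{(m)}_{\lceil s\rceil_m}|^2$, which is at most $C\Delta_m$ using Assumption \ref{ass2} and Lemma \ref{mbound}; the moments $2l_3$ and $2l_4$ are available since $p\geq 2+2l_3$ and $p\geq 4l_4$. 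The middle piece is bounded by $\mathbb{E}|e_s|^2$. That expectation is not stopped, so I would write
\[
\mathbb{E}|e_s|^2\leq \mathbb{E}|e_{s\wedge\tau_R}|^2+\mathbb{E}\big[|e_s|^2\mathbf 1_{\{\tau_R\leq s\}}\big].
\]
Applying Young's inequality to $|e|^{q-2}\mathbb{W}_2^2$ and then Gronwall yields
\[
\sup_{t\le T}\mathbb{E}|e_{t\wedge\tau_R}|^q\leq C e^{C(L(R)+1)T}\big(\Delta_m+\Delta_n+\varepsilon_R\big)^{q/2},
\]
where $\varepsilon_R$ bounds the tail term by H\"older's inequality: $(\mathbb{E}|e|^{2r})^{1/r}\,\mathbb{P}(\tau_R\leq T)^{1-1/r}$.

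The main obstacle is removing the localization. The factor $e^{CL(R)}$ blows up as $R\to\infty$, while the tail must still vanish. I would control $\mathbb{P}(\tau_R\leq T)$ through the exponential integrability furnished by Assumption \ref{ass6}, namely
\[
\mathbb{E}\big[\sup_t e^{f(|X^{(m)}_t|)e^{-\alpha t}}\big]\leq C
\]
uniformly in $m$, as in the Appendix. This gives $\mathbb{P}(\tau_R\leq T)\leq Ce^{-cf(R)}$. Condition (\ref{R}) then ensures that $e^{C L(R)}\mathbb{P}(\tau_R\le T)^{\theta}\to0$ for a suitable $\kappa$; I expect the constants to need matching against $\kappa$. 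The unstopped error then splits as
\[
\mathbb{E}|e_t|^q\leq\mathbb{E}|e_{t\wedge\tau_R}|^q+\big(\mathbb{E}|e_t|^{2q}\big)^{1/2}\mathbb{P}(\tau_R\leq T)^{1/2}.
\]
The condition $q\leq \frac p2+1-l_3$ and the companion one in $l_4$ ensure that the moments used in the discretization and tail Young/H\"older steps are finite. To conclude, first pick $R$ large so that the tail is below $\epsilon$, and then let $m,n\to\infty$.

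For the sup-norm statement with exponent $\bar q$, I would repeat the It\^o expansion inside $\sup_{t\le T\wedge\tau_R}$. The martingale term is handled by the BDG inequality and absorbed via Young's inequality, using the Lipschitz-type bound on $\sigma$ from Assumption \ref{ass4} with polynomial weight $|x|^{l_2}$. This introduces moments of order $\bar q l_2\leq p$, which is where the condition $\bar q\leq p/l_2$ enters. The remaining integral terms are then bounded by the pointwise-in-time estimate already obtained, since $\bar q\leq q$. The de-localization is done exactly as above.
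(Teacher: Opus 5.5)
Your strategy matches the paper's in its main steps. You use local monotonicity on a ball of radius $R$, and split $\mathbb{W}_2(\mu^{(m)}_{\lceil s\rceil_m},\mu^{(n)}_{\lceil s\rceil_n})$ three ways with $O(\Delta_m^{q/2}+\Delta_n^{q/2})$ discretization errors. The Gronwall factor $e^{CL(R)}$ is beaten by an exponential-integrability tail via (\ref{R}), with $R$ chosen first and then $m,n$, and the sup is handled by BDG with Assumption \ref{ass4}. The difference is how you localize.

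The paper localizes pointwise in time. Its event $\Omega^{m,n}(R)$ has to be read as the time-$s$ event $\{|X^{(m)}_s|\vee|X^{(n)}_s|>R\}$ inside the $ds$-integral; this suffices because Assumption \ref{ass1} is a pointwise condition, so nothing pathwise is needed. On that event the integrand is bounded by Assumption \ref{ass2} and H\"older. This is exactly where $2(q-1+l_3)\le p$ and $2(q-2+2l_4)\le p$ enter, and only fixed-time tails and fixed-time moments are used.

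Your stopping times avoid growth bounds on the bad set, but they cost you two things:
\begin{enumerate}
\item You need the path tail $\mathbb{P}(\tau_R\le T)$. Get it by optional stopping of $V(t,x)=\exp(e^{-\alpha t}f(|x|))$ at $\tau^m_R\wedge T$. This gives $\mathbb{P}(\tau^m_R\le T)\le Ce^{-e^{-\alpha T}f(R)}$ uniformly in $m$. Do not rely on a bound for $\mathbb{E}\sup_t V$; the Appendix does not provide one.
\item For the sup statement, de-localizing needs $\mathbb{E}[\sup_t|e_t|^{\bar q}\,\mathbb{I}_{\{\tau_R\le T\}}]\to0$, hence $\sup_m\mathbb{E}\sup_t|X^{(m)}_t|^{\bar q r}<\infty$ for some $r>1$. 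Lemma \ref{mbound}(ii) only reaches order $p/l_4$, which can fall below admissible $\bar q$ (e.g.\ $l_4=3$, $l_2=l_3=1$, $p=40$ allows $\bar q=16$). You can either sharpen the sup bound or do the sup estimate with the pointwise splitting. To sharpen it, bound the BDG term by $\frac12\mathbb{E}\sup_t|X^{(m)}_t|^{\bar p}+C\,\mathbb{E}\int_0^T|X^{(m)}_s|^{\bar p-2}\|\sigma\|^2ds$; this gives sup-moments up to order $p+2-2l_4\ge 2\bar q$.
\end{enumerate}

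The $\kappa$ issue you flag is a genuine gap, and the paper's proof has it too, only hidden. It matches exponents by using $\sup_t\mathbb{E}\exp(2q^*f(|X^{(m)}_t|))<\infty$ for arbitrary $q^*$, which Assumption \ref{ass6} does not give. With the honest tail $Ce^{-e^{-\alpha T}f(R)}$, one Gronwall step on $[0,T]$ requires $f(R)-KL(R)\to\infty$ with $K=C_qTe^{\alpha T}/\theta$, not with the given $\kappa$.

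You can close this by subdividing $[0,T]$ into intervals of length $T_0$ and restarting the localization at each $kT_0$, e.g.\ $\tau^{(k)}_R=\inf\{t\ge kT_0:|X^{(m)}_t|\vee|X^{(n)}_t|\ge R\}$. Then:
\begin{itemize}
\item the stopped error starts from the unstopped value $e_{kT_0}$;
\item its exit probability is bounded by the same global tail;
\item Gronwall on $[kT_0,(k+1)T_0]$ costs only $e^{C_q(L(R)+C)T_0}$.
\end{itemize}
By induction $\limsup_{m,n}\mathbb{E}|e_{kT_0}|^q=0$. So for each fixed $R$, the $\limsup_{m,n}$ over the $k$-th interval is at most $C(1+L(R))e^{C_qL(R)T_0}e^{-\theta e^{-\alpha T}f(R)}$ plus a pure tail term. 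By (\ref{R}) with the given $\kappa$, this tends to $0$ as $R\to\infty$ once $C_qT_0e^{\alpha T}<\kappa\theta$.
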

	%and ${q}\geq2$
	\begin{proof}
		For any $t\in[0,T]$, by It\^o's formula, we obtain
		\begin{equation*}\label{only}
			\begin{split}
				&|X_{t}^{(m)}-X_{t}^{ (n)}|^{q}\\=&	{q}\int_{0}^{t}|X_{s}^{(m)}-X_{s}^{ (n)}|^{{q}-2}\langle X_{s}^{(m)}-X_{s}^{ (n)},b(X_{s}^{(m)},\mu^{(m)}_{\lceil s\rceil_m})-b(X_{s}^{(n)},\mu^{(n)}_{\lceil s\rceil_n})\rangle ds\\&
				+\frac{{q}({q}-1)}{2}\int_{0}^{t}|X_{s}^{(m)}-X_{s}^{ (n)}|^{{q}-2}\| \sigma(X_{s}^{(m)},\mu^{(m)}_{\lceil s\rceil_m})-\sigma(X_{s}^{(n)},\mu^{(n)}_{\lceil s\rceil_n})\| ^2ds\\&
				+{q}\int_{0}^{t}|X_{s}^{(m)}-X_{s}^{ (n)}|^{{q}-2}\langle X_{s}^{(m)}-X_{s}^{ (n)},\sigma(X_{s}^{(m)},\mu^{(m)}_{\lceil s\rceil_m})-\sigma(X_{s}^{(n)},\mu^{(n)}_{\lceil s\rceil_n})\rangle dW_s\\
				=:&I_{1}(t)+I_{2}(t)+I_{3}(t).
			\end{split}
		\end{equation*}

		By using a similar proof of exponential integrability in Section \ref{aa6.1}, under Assumption \ref{ass6}, one can show that there exists a non-decreasing function $f:[0,\infty)\rightarrow[0,\infty)$ with $\mathbb{E}e^{f(|\xi|)}<\infty$ satisfying
		\begin{equation}\label{cheb}
			\sup_{0\leq t\leq T}\mathbb{E}[\exp({f(|X_t^{(m)}|)})]<\infty.
		\end{equation}
		Define 
		\begin{equation*}
			{	\Omega}^{m,n}(R)=\big\{\omega\in\Omega:|X^{(m)}_t|>R,~|X^{(n)}_t|> R\big\}
		\end{equation*}
		for a sufficiently large $R$. For any $q^{*}\geq2$ and the given function $f(\cdot)$, using the Chebyshev inequality gives that 
		\begin{equation}\label{cheb02}
			\begin{split}
				&\mathbb{P}({	\Omega}^{m,n}(R))\\\leq& \exp({-2q^{*}f(R)})\left(\sup_{0\leq t\leq T}\mathbb{E}\big[\exp(2q^{*}{f(|X^{(m)}_t|)})\big]+\sup_{0\leq t\leq T}\mathbb{E}\big[\exp(2q^{*}{f(|X^{(n)}_t|)})\big]\right)\\\leq& C_{\xi,q^{*},\alpha,\beta,L_3,T,f,\bar{f}}\exp({-2q^{*}f(R)}).
			\end{split}
		\end{equation}
		It is noteworthy that the estimate of this probability is independent of both $m$ and $n$.				By Assumptions \ref{ass1}, \ref{ass2}, and (\ref{W1}), we have
		\begin{equation}\label{chang}
			\begin{split}
				&\mathbb{E}\big[|X_{t}^{(m)}-X_{t}^{ (n)}|^{q}\big]\\=&\frac{{q}}{2}\mathbb{E}\Big[\int_{0}^{t}|X_{s}^{(m)}-X_{s}^{ (n)}|^{{q}-2}\big(2\langle X_{s}^{(m)}-X_{s}^{ (n)},b(X_{s}^{(m)},\mu^{(m)}_{\lceil s\rceil_m})-b(X_{s}^{(n)},\mu^{(n)}_{\lceil s\rceil_n})\rangle ds\\&~~~~+({q}-1)\| \sigma(X_{s}^{(m)},\mu^{(m)}_{\lceil s\rceil_m})-\sigma(X_{s}^{(n)},\mu^{(n)}_{\lceil s\rceil_n})\| ^2\big)ds\Big]\\\leq&\frac{{q}}{2}\mathbb{E}\Big[\Big(\int_{0}^{t}|X_{s}^{(m)}-X_{s}^{ (n)}|^{{q}-2}\big(2\langle X_{s}^{(m)}-X_{s}^{ (n)},b(X_{s}^{(m)},\mu^{(m)}_{\lceil s\rceil_m})-b(X_{s}^{(n)},\mu^{(n)}_{\lceil s\rceil_n})\rangle 
				\\&~~~~+({q}-1)\| \sigma(X_{s}^{(m)},\mu^{(m)}_{\lceil s\rceil_m})-\sigma(X_{s}^{(n)},\mu^{(n)}_{\lceil s\rceil_n})\| ^2\big)ds\Big)\cdot\mathbb{I}_{\Omega\setminus{	\Omega}^{m,n}(R)}\Big]\\
				&+{q}\mathbb{E}\Big[\Big(\int_{0}^{t}|X_{s}^{(m)}-X_{s}^{ (n)}|^{{q}-1}|b(X_{s}^{(m)},\mu^{(m)}_{\lceil s\rceil_m})-b(X_{s}^{(n)},\mu^{(n)}_{\lceil s\rceil_n})| ds\Big)\cdot\mathbb{I}_{{	\Omega}^{m,n}(R)}\Big]\\
				&+\frac{{q}({q}-1)}{2}\mathbb{E}\Big[\Big(\int_{0}^{t}|X_{s}^{(m)}-X_{s}^{ (n)}|^{{q}-2}\|\sigma(X_{s}^{(m)},\mu^{(m)}_{\lceil s\rceil_m})\\&~~~~~~~~~~~~~~~~~-\sigma(X_{s}^{(n)},\mu^{(n)}_{\lceil s\rceil_n})\|^2 ds\Big)\cdot\mathbb{I}_{{	\Omega}^{m,n}(R)}\Big]\\
				\leq&\frac{{q}}{2}\mathbb{E}\Big[\int_{0}^{t}|X_{s}^{(m)}-X_{s}^{ (n)}|^{{q}-2}\big(L(R)+L_{1}\|\mu^{(m)}_{\lceil s\rceil_m}\|_{\gamma}^{\gamma}+L_{1}\|\mu^{(n)}_{\lceil s\rceil_n}\|_{\gamma}^{\gamma}\big)\big(|X_{s}^{(m)}-X_{s}^{ (n)}|^{2}\\&~~~~+\mathbb{W}_{2}^{2}(\mu^{(m)}_{\lceil s\rceil_m},\mu^{(n)}_{\lceil s\rceil_n})\big)ds\Big]\\
				&+{q}L_{4}\mathbb{E}\Big[\Big(\int_{0}^{t}\big(|X_{s}^{(m)}|+|X_{s}^{ (n)}|\big)^{{q}-1}\big(2+|X_{s}^{(m)}|^{l_{3}}+\|\mu^{(m)}_{\lceil s\rceil_m}\|_{l_{3}}^{l_{3}}+|X_{s}^{(n)}|^{l_{3}}\\&~~~~~~~~~~+\|\mu^{(n)}_{\lceil s\rceil_n}\|_{l_{3}}^{l_{3}} \big)ds\Big)\cdot\mathbb{I}_{{	\Omega}^{m,n}(R)}\Big]\\
				&+3{q}({q}-1)L_{4}^2\mathbb{E}\Big[\Big(\int_{0}^{t}\big(|X_{s}^{(m)}|+|X_{s}^{ (n)}|\big)^{{q}-2}\big(2+|X_{s}^{(m)}|^{2l_{4}}+\|\mu^{(n)}_{\lceil s\rceil_n}\|_{2}^2\\&~~~~~~~~~~~~~~~~~~~~~+|X_{s}^{(n)}|^{2l_{4}}+\|\mu^{(n)}_{\lceil s\rceil_n}\|_2^2 \big) ds\Big)\cdot\mathbb{I}_{{	\Omega}^{m,n}(R)}\Big]\\
				\leq&\frac{{q}}{2}\mathbb{E}\Big[\int_{0}^{t}|X_{s}^{(m)}-X_{s}^{ (n)}|^{{q}-2}\big(L(R)+L_{1}\mathbb{E}|X_{\lceil s\rceil}^{(m)}|^{\gamma}+L_{1}\mathbb{E}|X_{\lceil s\rceil}^{(n)}|^{\gamma}\big)\big(|X_{s}^{(m)}-X_{s}^{ (n)}|^{2}\\&~~~~+3\mathbb{W}_{2}^{2}(\mu^{(m)}_{\lceil s\rceil_m},\mu^{(m)}_{s})+3\mathbb{W}_{2}^{2}(\mu^{(m)}_{s},\mu^{(n)}_{s})+3\mathbb{W}_{2}^{2}(\mu^{(n)}_{s},\mu^{(n)}_{\lceil s\rceil_n})\big)ds\Big]\\
				&+{q}L_{4}\mathbb{E}\Big[\Big(\int_{0}^{t}\big(|X_{s}^{(m)}|+|X_{s}^{ (n)}|\big)^{{q}-1}\big(2+|X_{s}^{(m)}|^{l_{3}}+\mathbb{E}|X^{(m)}_{\lceil s\rceil_m}|^{l_{3}}+|X_{s}^{(n)}|^{l_{3}}\\&~~~~~~~~~~~+\mathbb{E}|X^{(n)}_{\lceil s\rceil_n}|^{l_{3}} \big)ds\Big)\cdot\mathbb{I}_{{	\Omega}^{m,n}(R)}\Big]\\
				&+3{q}({q}-1)L_{4}^2\mathbb{E}\Big[\Big(\int_{0}^{t}\big(|X_{s}^{(m)}|+|X_{s}^{ (n)}|\big)^{{q}-2}\big(2+|X_{s}^{(m)}|^{2l_{4}}+\mathbb{E}|X^{(m)}_{\lceil s\rceil_m}|^2\\&~~~~~~~~~~~~~~~~~~~~~+|X_{s}^{(n)}|^{2l_{4}}+\mathbb{E}|X^{(n)}_{\lceil s\rceil_n}|^2 \big) ds\Big)\cdot\mathbb{I}_{{	\Omega}^{m,n}(R)}\Big].\\
			\end{split}
		\end{equation}

		Furthermore, the boundedness of the solution to (\ref{MVu}) implies that for any $q\geq 2$,
		%An application of the BDG inequality and H\"oler inequality with Assumptions \ref{ass1}-\ref{ass2} gives that
		\begin{equation}\label{mmnn}
			\begin{split}
				&\mathbb{E}|X_t^{(m)}-X_{\lceil t\rceil_m}^{(m)}|^q\\
				\leq&2^{q-1}\Big(\mathbb{E}\Big|\int_{\lceil t\rceil_m}^{t}b(X_{s}^{(m)},\mu_{\lceil s\rceil_m}^{(m)})ds\Big|^q+\mathbb{E}\Big|\int_{\lceil t\rceil_m}^{t}\sigma(X_{s}^{(m)},\mu_{\lceil s\rceil_m}^{(m)})dW_s\Big|^q\Big)\\
				\leq&2^{q-1}\Big(\Delta_{m}^{q-1}\int_{\lceil t\rceil_m}^{t}\mathbb{E}|b(X_{s}^{(m)},\mu_{\lceil s\rceil_m}^{(m)})|^qds+\mathbb{E}\big(\int_{\lceil t\rceil_m}^{t}\|\sigma(X_{s}^{(m)},\mu_{\lceil s\rceil_m}^{(m)})\|^2ds\big)^\frac{q}{2}\Big)\\
				\leq&2^{q-1}\Big(\Delta_{m}^{q-1}\int_{\lceil t\rceil_m}^{t}\mathbb{E}|b(X_{s}^{(m)},\mu_{\lceil s\rceil_m}^{(m)})|^qds+\Delta_{m}^{\frac{q}{2}-1}\int_{\lceil t\rceil_m}^{t}\|\sigma(X_{s}^{(m)},\mu_{\lceil s\rceil_m}^{(m)})\|^qds\Big)\\
				\leq&C_{\xi,l_{3},l_{4},q,L_{3},L_4,t}\Delta_{m}^\frac{q}{2},
			\end{split}
		\end{equation}
		where we have utilized the H\"older inequality and Assumption \ref{ass2}.

		Hence,  exploiting H\"older's inequality, Young's inequality, (\ref{W1}), and (\ref{mmnn}), we obtain
		\begin{equation}\label{001}
			\begin{split}
				&\sup_{0\leq t\leq T}\mathbb{E}\big[|X_{t}^{(m)}-X_{t}^{ (n)}|^{q}\big]\\
				\leq&\big(L(R)+C_{\xi,\gamma,L_{1},L_3,T}\big)\int_{0}^{T}\Big((5q-6)\sup_{0\leq s\leq t}\mathbb{E}\big[|X_{s}^{(m)}-X_{s}^{ (n)}|^{q}]+3\mathbb{E}\big[|X_{t}^{(m)}-X_{\lceil t\rceil_m}^{ (m)}|^{q}\big]\\&~~~~~~~~~~~~~~~~~~~~~~~~~~~~~+3\mathbb{E}\big[|X_{t}^{(n)}-X_{\lceil t\rceil_n}^{ (n)}|^{q}\big]\Big)dt\\
				&+C_{q,L_{4},T}\Big(1+\sup_{0\leq t\leq T}\mathbb{E}\big[|X_{t}^{(m)}|^{2({q}+l_{3}-1)}\big]+\sup_{0\leq t\leq T}\mathbb{E}\big[|X_{t}^{(n)}|^{2({q}+l_{3}-1)}\big]\Big)^\frac{1}{2}\big(\mathbb{P}({	\Omega}^{m,n}(R))\big)^\frac{1}{2}\\
				&+C_{q,L_{4},T}\Big(1+\sup_{0\leq t\leq T}\mathbb{E}\big[|X_{t}^{(m)}|^{2({q}+2l_{4}-2)}\big]+\sup_{0\leq t\leq T}\mathbb{E}\big[|X_{t}^{(n)}|^{2({q}+2l_{4}-2)}\big]\Big)^\frac{1}{2}\big(\mathbb{P}({	\Omega}^{m,n}(R))\big)^\frac{1}{2}\\
				\leq&(5{q}-6)\big(L(R)+C_{\xi,\gamma,L_{1},L_3,T}\big)\int_{0}^{T}\sup_{0\leq s\leq t}\mathbb{E}\big[|X_{s}^{(m)}-X_{s}^{ (n)}|^{q}\big]dt\\&+\bar{L}(R)(\Delta_{m}^\frac{{q}}{2}+\Delta_{n}^{\frac{{q}}{2}})
				+C_{\xi,l_{3},l_{4},q,L_3,L_{4},T}\big(\mathbb{P}({	\Omega}^{m,n}(R))\big)^\frac{1}{2},
			\end{split}
		\end{equation}
		where 
		$\bar{L}(\cdot): [0,\infty)\rightarrow[0,\infty)$ is a function associated with $L(\cdot)$.
		Thanks to Gronwall's inequality and (\ref{cheb02}), we derive that
		\begin{equation}\label{0009}
			\begin{split}
				&\sup_{0\leq t\leq T}\mathbb{E}\big[|X_{t}^{(m)}-X_{t}^{ (n)}|^{q}\big]\\
				\leq&C_{\xi,l_{3},l_{4},q,L_{3},L_4,T}\big(\mathbb{P}({	\Omega}^{m,n}(R))\big)^\frac{1}{2}\exp\left({(5q-6)(L(R)+C_{\xi,\gamma,L_{1},L_3,T})}\right)\\&+\bar{L}(R)(\Delta_{m}^\frac{q}{2}+\Delta_{n}^{\frac{q}{2}})\exp\left({(5q-6)(L(R)+C_{\xi,\gamma,L_{1},L_3,T})}\right)\\\leq&C_{\xi,\gamma,\alpha,\beta,l_{3},l_{4},q,L_{1},L_{3},L_4,T,f,\bar{f}}\exp\left({-(5q-6)(f(R)-L(R))}\right)\\&+\bar{L}(R)(\Delta_{m}^\frac{q}{2}+\Delta_{n}^{\frac{q}{2}})\exp\left({(5q-6)(L(R)+C_{\xi,\gamma,L_{1},L_3,T})}\right).
			\end{split}
		\end{equation}
		By (\ref{R}), it means that for any $\eta_1>0$, there exists an $R_{\eta_1}$ large enough such that, when $R\geq R_{\eta_1}$, the first term on the right-hand side of the inequality (\ref{0009}) is less than $\eta_1/2$.
		As $m$, $n\rightarrow\infty$, $\Delta_{m}$ and $\Delta_{n}$ tend to 0. Then, there exists an integer $U_{0}$ such that for all $m,n\geq U_0$, 
		%on both sides of (\ref{0009}), so that $\Delta_{m}$ and $\Delta_{n}$ can be made sufficiently small for}
	\begin{equation*}
		\begin{split}
			& \bar{L}(R_{\eta_1})(\Delta_{m}^\frac{q}{2}+\Delta_{n}^{\frac{q}{2}})\exp\left({(5q-6)(L(R_{\eta_1})+C_{\xi,\gamma,L_{1},L_3,T})}\right)\leq \eta_1/2.
		\end{split}
	\end{equation*}
	That is, for any $\eta_1>0$, one can first choose a constant $R_{\eta_1}>0$, and then find an integer $U_{0}$ such that for all $R\geq R_{\eta_1}$ and $m,n\geq U_0$,
	\begin{equation*}
		\begin{split}
			&\sup_{0\leq t\leq T}\mathbb{E}\big[|X_{t}^{(m)}-X_{t}^{ (n)}|^{q}\big]
			\leq\eta_1.
		\end{split}
	\end{equation*}

	We proceed to prove the second statement of Lemma \ref{cauchy}.				
	It is straightforward to verify that
	\begin{equation}\label{supmn}
		\begin{split}
			&\mathbb{E}\big[\sup_{0\leq s\leq t}|X_{s}^{(m)}-X_{s}^{ (n)}|^{\bar{q}}\big]\\\leq&
			\frac{\bar{q}}{2}\mathbb{E}\Big[\int_{0}^{t}|X_{s}^{(m)}-X_{s}^{ (n)}|^{\bar{q}-2}\big(L(R)+L_{1}\|\mu^{(m)}_{\lceil s\rceil_m}\|_{\gamma}^{\gamma}+L_{1}\|\mu^{(n)}_{\lceil s\rceil_n}\|_{\gamma}^{\gamma}\big)\big(|X_{s}^{(m)}-X_{s}^{ (n)}|^{2}\\&~~~~+\mathbb{W}_{2}^{2}(\mu^{(m)}_{\lceil s\rceil_m},\mu^{(n)}_{\lceil s\rceil_n})\big)ds\Big]\\+&	\bar{q}\mathbb{E}\Big[\Big(\int_{0}^{t}|X_{s}^{(m)}-X_{s}^{ (n)}|^{\bar{q}-1}|b(X_{s}^{(m)},\mu^{(m)}_{\lceil s\rceil_m})-b(X_{s}^{(n)},\mu^{(n)}_{\lceil s\rceil_n})| ds\Big)\cdot\mathbb{I}_{{	\Omega}^{m,n}(R)}\Big]\\
			+&\frac{\bar{q}({\bar{q}}-1)}{2}\mathbb{E}\Big[\Big(\int_{0}^{t}|X_{s}^{(m)}-X_{s}^{ (n)}|^{\bar{q}-2}\|\sigma(X_{s}^{(m)},\mu^{(m)}_{\lceil s\rceil_m})\\&~~~~~~~~~~~~~-\sigma(X_{s}^{(n)},\mu^{(n)}_{\lceil s\rceil_n})\|^2 ds\Big)\cdot\mathbb{I}_{{	\Omega}^{m,n}(R)}\Big]\\
			+&{\bar{q}}\mathbb{E}\Big[\sup_{0\leq s\leq t}\int_{0}^{s}|X_{u}^{(m)}-X_{u}^{ (n)}|^{\bar{q}-2}\langle X_{u}^{(m)}-X_{u}^{ (n)},\sigma(X_{u}^{(m)},\mu^{(m)}_{\lceil u\rceil_m})\\&~~~-\sigma(X_{u}^{(n)},\mu^{(n)}_{\lceil u\rceil_n})\rangle dW_u\Big].
		\end{split}
	\end{equation}
	By using BDG's inequality, Young's inequality, H\"older's inequality, Assumption \ref{ass4}, (\ref{W1}), and Lemma \ref{mbound} we have
	\begin{equation*}
		\begin{split}
			&\bar{q}	\mathbb{E}\big[\sup_{0\leq s\leq t}\int_{0}^{s}|X_{u}^{(m)}-X_{u}^{ (n)}|^{\bar{q}-2}\langle X_{u}^{(m)}-X_{u}^{ (n)},\sigma(X_{u}^{(m)},\mu^{(m)}_{\lceil u\rceil_m})\sigma(X_{u}^{(n)},\mu^{(n)}_{\lceil u\rceil_n})\rangle dW_u\big]\\\leq&
			\bar{q}(32/\bar{q})^{\bar{q}/2}	\mathbb{E}\left(\int_{0}^{t}|X_{s}^{(m)}-X_{s}^{ (n)}|^{2\bar{q}-2}\|\sigma(X_{s}^{(m)},\mu^{(m)}_{\lceil s\rceil_m})-\sigma(X_{s}^{(n)},\mu^{(n)}_{\lceil s\rceil_n})\|^2 ds\right)^\frac{1}{2}\\\leq&
			\frac{1}{2}\mathbb{E}\big[\sup_{0\leq s\leq t}|X_{s}^{(m)}-X_{s}^{ (n)}|^{\bar{q}}\big]+C_{\bar{q},T}\mathbb{E}\left[\int_{0}^{t}\|\sigma(X_{s}^{(m)},\mu^{(m)}_{\lceil s\rceil_m})-\sigma(X_{s}^{(n)},\mu^{(n)}_{\lceil s\rceil_n})\|^{\bar{q} }ds\right]\\\leq&
			\frac{1}{2}\mathbb{E}\big[\sup_{0\leq s\leq t}|X_{s}^{(m)}-X_{s}^{ (n)}|^{\bar{q}}\big]+C_{\bar{q},T}\mathbb{E}\left[\int_{0}^{t}\|\sigma(X_{s}^{(m)},\mu^{(m)}_{\lceil s\rceil_m})-\sigma(X_{s}^{(n)},\mu^{(m)}_{\lceil s\rceil_m})\|^{\bar{q} }ds\right]\\
			&+C_{\bar{q},T}\mathbb{E}\left[\int_{0}^{t}\|\sigma(X_{s}^{(n)},\mu^{(m)}_{\lceil s\rceil_m})-\sigma(X_{s}^{(n)},\mu^{(n)}_{\lceil s\rceil_n})\|^{\bar{q} }ds\right]\\
							\end{split}
		\end{equation*}
	\begin{equation}\label{BDG}
\begin{split}
			\leq&
			\frac{1}{2}\mathbb{E}\big[\sup_{0\leq s\leq t}|X_{s}^{(m)}-X_{s}^{ (n)}|^{\bar{q}}\big]\\&+C_{\bar{q},L_2,T}\mathbb{E}\left[\int_{0}^{t}\left(1+|X_{s}^{(m)}|^{\bar{q}l_2}+|X_{s}^{(n)}|^{\bar{q}l_2}\right)|X_{s}^{(m)}-X_{s}^{ (n)}|^{\bar{q}}ds\right]\\&+C_{\bar{q},L_2,T}\mathbb{E}\left[\int_{0}^{t}\left(\mathbb{W}_{2}^{\bar{q}}(\mu^{(m)}_{\lceil s\rceil_m},\mu^{(m)}_{s})+\mathbb{W}_{2}^{\bar{q}}(\mu^{(m)}_{s},\mu^{(n)}_{s})+\mathbb{W}_{2}^{\bar{q}}(\mu^{(n)}_{s},\mu^{(n)}_{\lceil s\rceil_n})\right)ds\right]\\
			\leq&
			\frac{1}{2}\mathbb{E}\big[\sup_{0\leq s\leq t}|X_{s}^{(m)}-X_{s}^{ (n)}|^{\bar{q}}\big]\\&+C_{\bar{q},L_2,T}\int_{0}^{t}\left(1+\mathbb{E}|X_{s}^{(m)}|^\frac{\bar{q}l_2(\bar{q}+\varepsilon_1)}{\varepsilon_1}+\mathbb{E}|X_{s}^{(n)}|^\frac{\bar{q}l_2(\bar{q}+\varepsilon_1)}{\varepsilon_1}\right)^\frac{\varepsilon_1}{\bar{q}+\varepsilon_1}\\&~~~~~~~~~~~~~~\cdot\left(\mathbb{E}|X_{s}^{(m)}-X_{s}^{ (n)}|^{\bar{q}+\varepsilon_1}\right)^\frac{\bar{q}}{\bar{q}+\varepsilon_1}ds\\
			&+C_{\bar{q},L_2,T}\int_{0}^{t}\Big(\mathbb{E}\big[|X_{s}^{(m)}-X_{\lceil s\rceil_m}^{ (m)}|^{\bar{q}}\big]+\mathbb{E}\big[|X_{s}^{(m)}-X_{s}^{ (n)}|^{\bar{q}}]+\mathbb{E}\big[|X_{s}^{(n)}-X_{\lceil s\rceil_n}^{ (n)}|^{\bar{q}}\big]\Big)ds
		\end{split}
	\end{equation}
	with an arbitrary positive constant ${\varepsilon_1}$ satisfying $\bar{q}^2l_{2}/(p-\bar{q}l_{2})\leq\varepsilon_1\leq q-\bar{q}$.
	Substituting (\ref{BDG}) into (\ref{supmn}) and following a similar proof as in (\ref{chang}) and (\ref{001}), we conclude that
	\begin{equation*}
		\begin{split}
			&\mathbb{E}\big[\sup_{0\leq t\leq T}|X_{t}^{(m)}-X_{t}^{ (n)}|^{\bar{q}}\big]\\\leq&	(10\bar{q}-12)\big(L(R)+C_{\xi,\gamma,\bar{q},L_{1},L_2,L_3,T}\big)\int_{0}^{T}\mathbb{E}\big[\sup_{0\leq s\leq t}|X_{s}^{(m)}-X_{s}^{ (n)}|^{\bar{q}}\big]dt\\&+\hat{L}(R)(\Delta_{m}^\frac{\bar{q}}{2}+\Delta_{n}^{\frac{\bar{q}}{2}})
			+C_{\xi,l_{3},l_{4},\bar{q},L_{3},L_4,T}\big(\mathbb{P}({	\Omega}^{m,n}(R))\big)^\frac{1}{2}\\&+C_{\xi,l_2,\bar{q},\varepsilon_1,L_2,L_3,T}\left(\sup_{0\leq t\leq T}\mathbb{E}|X_{t}^{(m)}-X_{t}^{ (n)}|^{\bar{q}+\varepsilon_1}\right)^\frac{\bar{q}}{\bar{q}+\varepsilon_1},
		\end{split}
	\end{equation*}
	where %$q\leq(\frac{p_0}{2}+1-l_{3})\wedge(\frac{p_0}{2}+2-2l_{4})$ and 
	$\hat{L}(\cdot): [0,\infty)\rightarrow[0,\infty)$ is a function related to $L(\cdot)$.
	According to (\ref{0009}), we deduce that
	\begin{equation*}
		\begin{split}
			&\mathbb{E}\big[\sup_{0\leq t\leq T}|X_{t}^{(m)}-X_{t}^{ (n)}|^{\bar{q}}\big]\\\leq&	(10\bar{q}-12)\big(L(R)+C_{\xi,\gamma,\bar{q},L_{1},L_2,L_3,T}\big)\int_{0}^{T}\mathbb{E}\big[\sup_{0\leq s\leq t}|X_{s}^{(m)}-X_{s}^{ (n)}|^{\bar{q}}\big]dt\\&+\hat{L}(R)(\Delta_{m}^\frac{\bar{q}}{2}+\Delta_{n}^{\frac{\bar{q}}{2}})
			+C_{\xi,l_{3},l_{4},\bar{q},L_{3},L_4,T}\big(\mathbb{P}({	\Omega}^{m,n}(R))\big)^\frac{1}{2}\\&+C_{\xi,\gamma,\alpha,\beta,l_{2},l_{3},l_{4},\varepsilon_1,\bar{q},L_{1},L_{2},L_3,L_{4},T,f,\bar{f}}\exp\left({-(5\bar{q}-6)(f(R)-L(R))}\right)\\&+(2\bar{L}(R))^\frac{\bar{q}}{\bar{q}+\varepsilon_1}(\Delta_{m}^\frac{\bar{q}}{2}+\Delta_{n}^{\frac{\bar{q}}{2}})\exp\left({5\bar{q}L(R)+C_{\xi,\gamma,L_{1},L_3,T}}\right).
		\end{split}
	\end{equation*}
	By (\ref{cheb02}) and the Gronwall inequality, we have
	\begin{equation}\label{unigron}
		\begin{split}
			&	\mathbb{E}\big[\sup_{0\leq t\leq T}|X_{t}^{(m)}-X_{t}^{ (n)}|^{\bar{q}}\big]\\	
			\leq&C_{\xi,\gamma,\bar{q},\alpha,\beta,l_{3},l_{4},L_{1},L_{2},L_3,L_{4},T,f,\bar{f}}\exp\left({-(10\bar{q}-12)(f(R)-L(R))}\right)	\\&+C_{\xi,\gamma,\alpha,\beta,l_{2},l_{3},l_{4},\varepsilon_1,\bar{q},L_{1},L_{2},L_3,L_{4},T,f,\bar{f}}\exp\left({-(5\bar{q}-6)(f(R)-3L(R))}\right)\\&
			+C_{\xi,\gamma,\bar{q},L_{1},L_2,L_3,T}\left(\hat{L}(R)(\Delta_{m}^\frac{\bar{q}}{2}+\Delta_{n}^{\frac{\bar{q}}{2}})+(2\bar{L}(R))^\frac{\bar{q}}{\bar{q}+\varepsilon_1}(\Delta_{m}^\frac{\bar{q}}{2}+\Delta_{n}^{\frac{\bar{q}}{2}})\right)\cdot\\&~~~~\exp\left((15\bar{q}-12)f(R)\right).
		\end{split}
	\end{equation}
	According to (\ref{R}), for any $\eta_2>0$, choose an $R_{\eta_2}$ sufficiently large so that the first two terms on the right-hand side of (\ref{unigron}) are less than $\eta_2/2$, whenever $R\geq R_{\eta_2}$. Similarly, one may further choose an integer $\bar{U}_0$ such that for all $m,n\geq \bar{U}_0$, the last term on the right-hand side of (\ref{unigron}) is less than $\eta_2/2$. That is,
	\begin{equation*}
		\begin{split}
			\lim_{m,n\rightarrow\infty}	\mathbb{E}\big[\sup_{0\leq t\leq T}|X_{t}^{(m)}-X_{t}^{ (n)}|^{\bar{q}}\big]	\rightarrow0.
		\end{split}
	\end{equation*}
	The proof is therefore completed.
\end{proof}

\section{ Proof of Theorem \ref{keythm}}\label{secc4}

This section aims to demonstrate the propagation of chaos theory under locally monotone condition w.r.t. the state variable and measure argument. Firstly, the well-posedness of the interacting particle system is established in the following lemma.

\begin{lem}\label{keylem}
	Let Assumptions \ref{ass1}-\ref{ass2} hold with $p\geq2l_4\vee\gamma$. The system (\ref{IPS}) admits a unique strong solution. Moreover, for $i\in\mathbb{S}_N$ and $2\leq\bar{p}\leq p/l_4$, it holds that
	\begin{equation*}
		\sup_{0\leq t\leq T}	\mathbb{E}|X_{t}^{i,N}|^{p}\leq C_{\xi,p,L_3,T},
	\end{equation*}
	and
	\begin{equation*}
		\mathbb{E}\big[\sup_{0\leq t\leq T}	|X_{t}^{i,N}|^{\bar{p}}\big]\leq C_{\xi,l_{4},L_3,T}.
	\end{equation*}
	%			with $\xi\in\mathcal{L}^{q_2}(\mathbb{R}^d)$ and $i\in\mathbb{S}_N$.
\end{lem}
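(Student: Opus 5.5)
We treat (\ref{IPS}) as a classical SDE in $\mathbb{R}^{dN}$ for $\mathbf{X}_t=(X_t^{1,N},\dots,X_t^{N,N})$, driven by the $Nm_1$-dimensional Brownian motion $(W^1,\dots,W^N)$, with drift $B(\mathbf{x})=(b(x^i,\mu^{\mathbf{x},N}))_{i}$ and block-diagonal diffusion $\Sigma(\mathbf{x})=\mathrm{diag}(\sigma(x^i,\mu^{\mathbf{x},N}))_i$, where $\mu^{\mathbf{x},N}=\frac1N\sum_j\delta_{x^j}$. The plan is to check that $(B,\Sigma)$ satisfy the local monotonicity and coercivity hypotheses of Theorem 3.1.1 in \cite{liuwei}, exactly as was done for (\ref{MVeu}). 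That theorem gives existence and uniqueness of a strong solution; the moment bounds are then obtained as in Lemma \ref{mbound}.

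\textbf{Local monotonicity.} Take $|\mathbf{x}|\vee|\bar{\mathbf{x}}|\le R$. Then every $|x^i|\le R$, and $\|\mu^{\mathbf{x},N}\|_\gamma^\gamma=\frac1N\sum_j|x^j|^\gamma\le R^\gamma$. Summing Assumption \ref{ass1} over $i$ (using $q-1\ge1$) and applying (\ref{W3}) in the form $\mathbb{W}_2^2(\mu^{\mathbf{x},N},\mu^{\bar{\mathbf{x}},N})\le\frac1N|\mathbf{x}-\bar{\mathbf{x}}|^2$ gives
\[
2\langle \mathbf{x}-\bar{\mathbf{x}},B(\mathbf{x})-B(\bar{\mathbf{x}})\rangle+\|\Sigma(\mathbf{x})-\Sigma(\bar{\mathbf{x}})\|^2\le 2\big(L(R)+2L_1R^\gamma\big)|\mathbf{x}-\bar{\mathbf{x}}|^2 ,
\]
which is a finite local constant.

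\textbf{Coercivity and continuity.} Summing Assumption \ref{ass3} over $i$, with $\|\mu^{\mathbf{x},N}\|_2^2=\frac1N|\mathbf{x}|^2$, gives $2\langle\mathbf{x},B(\mathbf{x})\rangle+\|\Sigma(\mathbf{x})\|^2\le L_3N+2L_3|\mathbf{x}|^2$. Continuity of $B$ and $\Sigma$ follows from Assumption \ref{ass4} together with (\ref{W3}). The theorem therefore yields a unique strong solution. Since $N$ is fixed, $N$-dependent constants are harmless at this stage.

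\textbf{Moment bounds.} These are the main point, because the constants must not depend on $N$ and so cannot come from the $\mathbb{R}^{dN}$ estimate. For each $i$, apply Itô's formula to $|X_t^{i,N}|^p$, with a localisation by stopping times $\tau_R$ removed via Fatou's lemma. Assumption \ref{ass3} bounds the drift part by $\frac{pL_3}{2}|X^{i,N}|^{p-2}\big(1+|X^{i,N}|^2+\frac1N\sum_j|X^{j,N}|^2\big)$. Young's inequality turns this into $C\big(1+|X^{i,N}|^p+\frac1N\sum_j|X^{j,N}|^p\big)$. Now average over $i$ and take expectations. By exchangeability (the initial data $\xi^i$ are i.i.d. and the system is symmetric), $\mathbb{E}|X_t^{i,N}|^p$ does not depend on $i$, so every term reduces to $\mathbb{E}|X_t^{i,N}|^p$. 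Gronwall's inequality then gives $\sup_t\mathbb{E}|X^{i,N}_t|^p\le(\mathbb{E}|\xi|^p+L_3T)e^{CT}$, with $C$ independent of $N$.

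For the supremum bound with $2\le\bar p\le p/l_4$, repeat the argument of Lemma \ref{mbound}(ii). The BDG term is controlled by $\frac12\mathbb{E}\sup|X^{i,N}|^{\bar p}$ plus $C\int_0^T\mathbb{E}\|\sigma\|^{\bar p}\,ds$. By Assumption \ref{ass2}, $\mathbb{E}\|\sigma\|^{\bar p}\le C\big(1+\mathbb{E}|X^{i,N}|^{\bar pl_4}+\frac1N\sum_j\mathbb{E}|X^{j,N}|^{2}\big)$, and this is bounded by the first estimate since $\bar pl_4\le p$. The one technical obstacle is justifying that the local martingale terms have zero expectation. This is handled by the stopping-time localisation, which is uniform in $N$ because all bounds are per-particle.
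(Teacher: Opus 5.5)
Your proposal is correct and follows the paper's route: you rewrite (\ref{IPS}) as an SDE on $\mathbb{R}^{dN}$, sum Assumption \ref{ass1} with (\ref{W3}) to get local monotonicity, invoke \cite[Theorem 3.1.1]{liuwei}, and then derive $N$-independent moment bounds as in Lemma \ref{mbound} using that the particles are identically distributed. You also check coercivity from Assumption \ref{ass3}, which that theorem actually requires, whereas the paper only records the growth bounds (\ref{cub})--(\ref{cus}). One harmless slip: in the BDG step the measure term should be $\|\mu_s^{X,N}\|_2^{\bar p}\le\frac1N\sum_j|X_s^{j,N}|^{\bar p}$ by Jensen, not $\frac1N\sum_j|X_s^{j,N}|^{2}$; this is still controlled by your first estimate since $\bar p\le p$.
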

\begin{proof}
	For $\mathbf{x}:=(x_1^\top,x_2^\top,\cdots,x_N^\top)^\top\in\mathbb{R}^{dN}$, $x_i\in\mathbb{R}^d$, define
	\begin{equation*}
		\mathbf{b}(\mathbf{x})=\left(b(x_1,\frac{1}{N}\sum_{j=1}^{N}\delta_{x_j})^\top,b(x_2,\frac{1}{N}\sum_{j=1}^{N}\delta_{x_j})^\top,\cdots,b(x_N,\frac{1}{N}\sum_{j=1}^{N}\delta_{x_j})^\top\right)^\top,
	\end{equation*}
	and 
	\begin{equation*}
		\left.\bm{\sigma}(\mathbf{x})=\left(\begin{array}{cccc}\sigma\left(x_1,\frac1N\sum_{j=1}^N\delta_{x_j}\right)&\bm{0}&\cdots&\bm{0}\\\bm{0}&\sigma\left(x_2,\frac1N\sum_{j=1}^N\delta_{x_j}\right)&\cdots&\bm{0}\\\vdots&\vdots&\ddots&\vdots\\\bm{0}&\bm{0}&\cdots&\sigma\left(x_N,\frac1N\sum_{j=1}^N\delta_{x_j}\right)\end{array}\right.\right),
	\end{equation*}
	where $\bm{0}$ denotes the $d\times m_1$ null matrix. 
	Denote	$\mathbf{X}_{t}^{N}=({X_t^{1,N}}^\top,{X_t^{2,N}}^\top,\cdots,{X_t^{N,N}}^\top)^\top$.
	Thus, (\ref{IPS}) can be transformed into
	\begin{equation}\label{MVv}
		d\mathbf{X}_{t}^{N}=\mathbf{b}(\mathbf{X}_{t}^{N})dt+\bm{\sigma}(\mathbf{X}_{t}^{N})d\mathbf{W}_{t}^{N}, 
	\end{equation}
	with the initial value $\xi^{N}:=\left({\xi^{1}}^\top,{\xi^{2}}^\top,\cdots,{\xi^{N}}^\top\right)^\top$ and $$\mathbf{W}_{t}^{N}:= \left({W^{1}_t}^\top,{W^{2}_t}^\top,\cdots,{W^{N}_t}^\top\right)^\top,$$ which is an $m_1N$ dimensional Brownian motion.
	Applying (\ref{W3}) yields that
	\begin{equation*} 
		\mathbb{W}_{2}^2(\frac{1}{N}\sum_{j=1}^{N}\delta_{x_j},\frac{1}{N}\sum_{j=1}^{N}\delta_{\bar{x}_j})\leq\frac{1}{N}\sum_{j=1}^{N}|x_j-\bar{x}_j|^2.
	\end{equation*} 
	By Assumption \ref{ass1}, it can be verified that, for any $|\mathbf{x}|\vee|\mathbf{\bar{x}}|\leq R$,
	\begin{equation*}
		\begin{split}
			&2	\langle \mathbf{x}-\mathbf{\bar{x}},\mathbf{b}(\mathbf{x})-\mathbf{b}(\mathbf{\bar{x}})\rangle+(p-1)\|\bm{\sigma}(\mathbf{x})-\bm{\sigma}(\mathbf{\bar{x}})\|^2\\
			=&\sum_{i=1}^{N}\Big(2	\langle x_i-\bar{x}_i,b(x_i,\frac{1}{N}\sum_{j=1}^{N}\delta_{x_j})-b(\bar{x}_i,\frac{1}{N}\sum_{j=1}^{N}\delta_{\bar{x}_j})\rangle\\&~~+(p-1)\|\sigma(x_i,\frac{1}{N}\sum_{j=1}^{N}\delta_{x_j})-\sigma(\bar{x}_i,\frac{1}{N}\sum_{j=1}^{N}\delta_{\bar{x}_j})\|^2\Big)\\
			\leq&\sum_{i=1}^{N} \Big(L(R)+L_{1}\frac{1}{N}\sum_{j=1}^{N}|x_j|^\gamma+L_{1}\frac{1}{N}\sum_{j=1}^{N}|\bar{x}_j|^\gamma\Big)\big(|x_i-\bar{x}_i|^2+\frac{1}{N}\sum_{j=1}^{N}|x_j-\bar{x}_j|^2\big)\\
		\end{split}
	\end{equation*} 
	\begin{equation}\label{IPSxin} 
		\begin{split}
			\leq& \big(L(R)+L_{1}|\mathbf{x}|^\gamma+L_{1}|\mathbf{\bar{x}}|^\gamma\big)\big(\sum_{i=1}^{N}|x_i-\bar{x}_i|^2+\sum_{j=1}^{N}|x_j-\bar{x}_j|^2\big)\\
			=:& \tilde{L}(R)|\mathbf{x}-\mathbf{\bar{x}}|^2.
		\end{split}
	\end{equation} 
	Furthermore, using Assumption \ref{ass2} implies 
	\begin{equation}\label{cub} 
		\begin{split}
			|\mathbf{b}(\mathbf{x})|
			=\sum_{i=1}^{N}\Big| b(x_i,\frac{1}{N}\sum_{j=1}^{N}\delta_{x_j})\Big|
			\leq &L_{4}\sum_{i=1}^{N} \Big(1+|x_i|^{l_{3}}+\frac{1}{N}\sum_{j=1}^{N}|x_j|^{l_{3}}\Big)\\
			\leq& 2L_{4}N \big(1+|\mathbf{x}|^{l_{3}}\big),
		\end{split}
	\end{equation} 
	and 
	\begin{equation}\label{cus} 
		\begin{split}
			\|\bm{\sigma}(\mathbf{x})\|^2
			=\sum_{i=1}^{N}\Big\| \sigma(x_i,\frac{1}{N}\sum_{j=1}^{N}\delta_{x_j})\Big\|^2
			\leq &L^2_4\sum_{i=1}^{N} \Big(1+|x_i|^{l_{4}}+\Big(\frac{1}{N}\sum_{j=1}^{N}|x_j|^{2}\Big)^\frac{1}{2}\Big)^2\\
			\leq& 3L_{4} ^2\sum_{i=1}^{N} \Big(1+|x_i|^{2l_{4}}+\frac{1}{N}\sum_{j=1}^{N}|x_j|^{2}\Big)\\
			\leq& 6L_{4} ^2 N\Big(1+|\mathbf{x}|^{2l_{4}}\Big).
		\end{split}
	\end{equation} 
	By \eqref{IPSxin}, \eqref{cus} and \cite[Theorem~3.1.1]{liuwei}, the existence and uniqueness of the solution to \eqref{MVv} can be established.
	Once the well-posedness of (\eqref{IPS}) is verified, the moment boundedness of its solution follows from Assumption~\ref{ass3}, together with the fact that the particles are identically distributed. The proof is analogous to that of Theorem~\ref{thm1}. It is worth noting that the derived bounds are irrelevant to particle number $N$. So we state the result without proof. 
\end{proof}

We are now in the position to prove Theorem \ref{keythm}.

\begin{proof}
	%						The deviation of system (\ref{MV0}) and (\ref{projectMV}) is represented by $$\Delta_{t}^{n}=X_{t}^{i,N}-X_{t}^{ n}.$$
	For any $i\in\mathbb{S}_N$ and large enough $R>0$, define 
	\begin{equation*}
		{	\Omega}^{i}(R)=\big\{\omega\in\Omega:|X^{i}_t|>R,~|X^{i,N}_t|>R\big\}.
	\end{equation*}
	
	According to Assumption \ref{ass6}, it could be found that there exists a non-decreasing function $f:[0,\infty)\rightarrow[0,\infty)$ with $\mathbb{E}e^{f(|\xi^i|)}<\infty$ satisfying
	\begin{equation}\label{cheb2}
		\sup_{0\leq t\leq T}\mathbb{E}[\exp(f({|X_t^{i,N}|)})]<\infty,~~~\sup_{0\leq t\leq T}\mathbb{E}[\exp({f(|X_t^{i}|)})]<\infty.
	\end{equation}
	For any $q^{*}\geq2$ and the given function $f(R)$, applying the Chebyshev inequality and (\ref{cheb2}) gives that 
	\begin{equation}\label{pccheb}
		\begin{split}
			&	\mathbb{P}(	{	\Omega}^{i}(R))\\\leq &\exp({-2q^{*}f(R)})\left(\sup_{0\leq t\leq T}\mathbb{E}\big[\exp({2q^{*}f(|X^{i,N}_t|)})\big]+\sup_{0\leq t\leq T}\mathbb{E}\big[\exp({2q^{*}f(|X^{i}_t|)})\big]\right)\\\leq &C_{\xi,q^{*},\alpha,\beta,L_3,T,f,\bar{f}} \exp({-2q^{*}f(R)}).
		\end{split}
	\end{equation}
	Using It\^o's formula leads to
	\begin{equation}\label{poc}
		\begin{split}
			&	|X_{t}^{i,N}-X_{t}^{ i}|^{q}\\=&	{q}\int_{0}^{t}	|X_{s}^{i,N}-X_{s}^{i}|^{{q}-2}\langle X_{s}^{i,N}-X_{s}^{i},b(X_{s}^{i,N},\mu_{s}^{X,N})-b(X_{s}^{i},\mu_{s}^i)\rangle ds\\
			&+\frac{{q}({q}-1)}{2}\int_{0}^{t}|X_{s}^{i,N}-X_{s}^{i}|^{{q}-2}\|\sigma(X_{s}^{i,N},\mu_{s}^{X,N})-\sigma(X_{s}^{i},\mu_{s}^i)\|^2ds\\&
			+{q}\int_{0}^{t}|X_{s}^{i,N}-X_{s}^{i}|^{{q}-2}\langle X_{s}^{i,N}-X_{s}^{i},\sigma(X_{s}^{i,N},\mu_{s}^{X,N})-\sigma(X_{s}^{i},\mu_{s}^i)\rangle  dW_s^i.\\
			%	=:&pJ_1^{n}(t)+\frac{p(p-1)}{2}J_2^{n}(t)+pJ_3^{n}(t).
		\end{split}
	\end{equation}
	Then, from Assumptions \ref{ass1}, \ref{ass2}, and Young's inequlity, we derive 
	\begin{equation*}\label{poc1}
		\begin{split}
			&	\mathbb{E}	|X_{t}^{i,N}-X_{t}^{i}|^{q}\\
			=&\frac{q}{2}\mathbb{E}\Big[\int_{0}^{t}|X_{s}^{i,N}-X_{s}^{i}|^{{q}-2}\big(2\langle X_{s}^{i,N}-X_{s}^{i},b(X_{s}^{i,N},\mu_{s}^{X,N})-b(X_{s}^{i},\mu_{s}^i)\rangle ds\\&+({q}-1)\|\sigma(X_{s}^{i,N},\mu_{s}^{X,N})-\sigma(X_{s}^{i},\mu_{s}^i)\|^2\big)ds\Big]\\
				\end{split}
		\end{equation*}
				\begin{equation*}
				\begin{split}
			\leq&\frac{{q}}{2}\mathbb{E}\Big[\Big(\int_{0}^{t}|X_{s}^{i,N}-X_{s}^{i}|^{{q}-2}\big(2\langle X_{s}^{i,N}-X_{s}^{i},b(X_{s}^{i,N},\mu_{s}^{X,N})-b(X_{s}^{i},\mu_{s}^i)\rangle ds\\&+({q}-1)\|\sigma(X_{s}^{i,N},\mu_{s}^{X,N})-\sigma(X_{s}^{i},\mu_{s}^i)\|^2\big)ds\Big)\cdot\mathbb{I}_{\Omega\setminus{	\Omega}^{i}(R)}\Big]\\
			&+{q}\mathbb{E}\Big[\Big(\int_{0}^{t}|X_{s}^{i,N}-X_{s}^{i}|^{{q}-1}|b(X_{s}^{i,N},\mu_{s}^{X,N})-b(X_{s}^{i},\mu_{s}^i)| ds\Big)\cdot\mathbb{I}_{{	\Omega}^{i}(R)}\Big]\\
			&+\frac{{q}({q}-1)}{2}\mathbb{E}\Big[\Big(\int_{0}^{t}|X_{s}^{i,N}-X_{s}^{i}|^{{q}-2}\|\sigma(X_{s}^{i,N},\mu_{s}^{X,N})-\sigma(X_{s}^{i},\mu_{s}^i)\|^2 ds\Big)\cdot\mathbb{I}_{{	\Omega}^{i}(R)}\Big]\\
			\leq&\frac{{q}}{2}\mathbb{E}\Big[\int_{0}^{t} |X_{s}^{i,N}-X_{s}^{i}|^{{q}-2}\big(L(R)+L_{1}\mathbb{E}|X_{s}^{i,N}|^{\gamma}+L_{1}\mathbb{E}|X_{s}^{i}|^{\gamma}\big)\\&~~~~~\cdot\big(|X_{s}^{i,N}-X_{s}^{i}|^{2}+\mathbb{W}_{2}^{2}(\mu^{X,N}_{s},\mu^{i}_{s})\big)ds\Big]\\	
			&+{q}L_{4}\mathbb{E}\Big[\Big(\int_{0}^{t}\big(|X_{s}^{i,N}|+|X_{s}^{i}|\big)^{{q}-1}\\&~~~~~~~~~~~\cdot\big(2+|X_{s}^{i,N}|^{l_{3}}+\mathbb{E}|X_{s}^{i,N}|^{l_{3}}+|X_{s}^{i}|^{l_{3}}+\mathbb{E}|X_{s}^{i}|^{l_{3}} \big)ds\Big)\cdot\mathbb{I}_{{	\Omega}^{i}(R)}\Big]\\
			&+3{q}({q}-1)L_{4}^2\mathbb{E}\Big[\Big(\int_{0}^{t}\big(|X_{s}^{i,N}|+|X_{s}^{i}|\big)^{{q}-2}\big(2+|X_{s}^{i,N}|^{2l_{4}}+\mathbb{E}|X_{s}^{i,N}|^{2}\\&~~~~~~~~~~~~~~~~~~~~~+|X_{s}^{i}|^{2l_{4}}+\mathbb{E}|X_{s}^{i}|^{2} \big) ds\Big)\cdot\mathbb{I}_{{	\Omega}^{i}(R)}\Big].\\
		\end{split}
	\end{equation*}
	By the notation $\tilde{\mu}^{X,N}_{t}:=\frac{1}{N}\sum_{j=1}^{N}\delta_{X^{j}_t}$, Young's inequality, H\"older's inequality, and the fact that $(X^{j,N}-X^{j})_{j\in\mathbb{S}_n}$ are identically distributed, we arrive at
	\begin{equation*}
		\begin{split}
			&	\mathbb{E}	|X_{t}^{i,N}-X_{t}^{i}|^{q}\\
			\leq&\big(L(R)+C_{\xi,\gamma,L_{1},L_3,T}\big)\frac{{q}}{2}\mathbb{E}\Big[\int_{0}^{t} |X_{s}^{i,N}-X_{s}^{i}|^{{q}}ds\Big]\\	&+
			\big(L(R)+C_{\xi,\gamma,L_{1},L_3,T}\big){q}\mathbb{E}\Big[\int_{0}^{t} |X_{s}^{i,N}-X_{s}^{i}|^{{q-2}}\big(\mathbb{W}_{2}^{2}(\mu^{X,N}_{s},\tilde{\mu}^{X,N}_{s})\\&~~~~~~~~~~~~~~~~~~~~~~~~~~~~~~~~~~+\mathbb{W}_{2}^{2}(\tilde{\mu}^{X,N}_{s},\mu^{i}_{s})\big)ds\Big]\\	&+{q}L_{4}\mathbb{E}\Big[\Big(\int_{0}^{t}\big(|X_{s}^{i,N}|+|X_{s}^{i}|\big)^{{q}-1}\\&~~~~~~~~~~\cdot\big(2+|X_{s}^{i,N}|^{l_{3}}+\mathbb{E}|X_{s}^{i,N}|^{l_{3}}+|X_{s}^{i}|^{l_{3}}+\mathbb{E}|X_{s}^{i}|^{l_{3}} \big)ds\Big)\cdot\mathbb{I}_{{	\Omega}^{i}(R)}\Big]\\
			&+3{q}({q}-1)L_{4}^2\mathbb{E}\Big[\Big(\int_{0}^{t}\big(|X_{s}^{i,N}|+|X_{s}^{i}|\big)^{{q}-2}\big(2+|X_{s}^{i,N}|^{2l_{4}}+\mathbb{E}|X_{s}^{i,N}|^{2}\\&~~~~~~~~~~~~~~~~~~~~~+|X_{s}^{i}|^{2l_{4}}+\mathbb{E}|X_{s}^{i}|^{2} \big) ds\Big)\cdot\mathbb{I}_{{	\Omega}^{i}(R)}\Big]\\
			\leq&(L(R)+C_{\xi,\gamma,L_{1},L_3,T})(\frac{5q}{2}-4)\int_{0}^{t}\mathbb{E} |X_{s}^{i,N}-X_{s}^{i}|^{{q}}ds\\&+2(L(R)+C_{\xi,\gamma,L_{1},L_3,T})\int_{0}^{t}\mathbb{E} \big[\mathbb{W}_{2}^{q}(\mu^{X,N}_{s},\tilde{\mu}^{X,N}_{s})\big]ds\\
					\end{split}
		\end{equation*}
	\begin{equation*}
\begin{split}
			&	+2(L(R)+C_{\xi,\gamma,L_{1},L_3,T})\int_{0}^{t}\mathbb{E} \big[\mathbb{W}_{2}^{q}(\tilde{\mu}^{X,N}_{s},\mu^{i}_{s})\big]ds\\&+C_{\xi,l_{3},l_{4},q,L_{3},L_4,T}\big(\mathbb{P}({	\Omega}^{i}(R))\big)^\frac{1}{2}\\
			\leq&(L(R)+C_{\xi,\gamma,L_{1},L_3,T})(\frac{5{q}}{2}-2)\int_{0}^{t}\mathbb{E} |X_{s}^{i,N}-X_{s}^{i}|^{{q}}ds\\&+2(L(R)+C_{\xi,\gamma,L_{1},L_3,T})\int_{0}^{T}\mathbb{E}\big[ \mathbb{W}_{2}^{q}(\tilde{\mu}^{X,N}_{s},\mu^{i}_{s})\big]ds
			+C_{\xi,l_{3},l_{4},q,L_{3},L_4,T}\big(\mathbb{P}({	\Omega}^{i}(R))\big)^\frac{1}{2},\\
		\end{split}
	\end{equation*}
	where we have used (\ref{W3}), Theorem \ref{thm1}, and Lemma \ref{keylem}.

	By using Gronwall's inequality and (\ref{pccheb}), one can see that 
	\begin{equation*}
		\begin{split}
			&	\sup_{0\leq t\leq T}\mathbb{E}	|X_{t}^{i,N}-X_{t}^{ i}|^{q}\\
			\leq&\big(2L(R)+C_{\xi,\gamma,L_{1},L_3,T}\big)\exp\big((\frac{5{q}}{2}-2)(L(R)+C_{\xi,\gamma,L_{1},L_3,T})\big)\int_{0}^{T}\mathbb{E}\big[ \mathbb{W}_{2}^{{q}}(\tilde{\mu}^{X,N}_{s},\mu^{i}_{s})\big]ds\\&+C_{\xi,l_{3},l_{4},q,L_{3},L_4,T}\exp\big((\frac{5{q}}{2}-2)(L(R)+C_{\xi,\gamma,L_{1},L_3,T})\big)\big(\mathbb{P}({	\Omega}^{i}(R))\big)^\frac{1}{2}\\
			\leq&C_{\xi,\gamma,q,L_{1},L_3,T}\exp\left({\frac{5q}{2}L(R)}\right)	\int_{0}^{T}\mathbb{E}\big[ \mathbb{W}_{2}^{{q}}(\tilde{\mu}^{X,N}_{s},\mu^{i}_{s})\big]ds\\&+C_{\xi,\gamma,\alpha,\beta,l_{3},l_{4},q,L_{1},L_{3},L_4,T,f,\bar{f}}\exp\left({-(\frac{5{q}}{2}-2)(f(R)-L(R))}\right).
		\end{split}
	\end{equation*}
	Since $R$ is independent of $N$, letting $N\rightarrow\infty$, the Lebesgue dominated convergence theorem with the result
	$$	\lim_{N\rightarrow\infty}\mathbb{E}\big[ \mathbb{W}_{2}^{q}(\tilde{\mu}^{X,N}_{s},\mu^{i}_{s})\big]=0$$in \cite[Theorem 5.8]{ 2018} implies that
	\begin{equation*}
		\begin{split}
			&\lim_{N\rightarrow\infty}\sup_{0\leq t\leq T}	\mathbb{E}	|X_{t}^{i,N}-X_{t}^{ i}|^{q}\\
			\leq&C_{\xi,\gamma,\alpha,\beta,l_{3},l_{4},q,L_{1},L_{3},L_4,T,f,\bar{f}}\exp\left({-(\frac{5{q}}{2}-2)(f(R)-L(R))}\right).
		\end{split}
	\end{equation*}
	We complete the proof by letting $R\rightarrow\infty$. Accordingly, \eqref{B} can be established by applying the same lines of proof as in Lemma \ref{cauchy}.
\end{proof}

\section{Proof of Theorem \ref{longpoc}}\label{secc5}
In this section, we intend to complete the proof of Theorem \ref{longpoc}.
The following lemma is presented as a preliminary step.
%	In this section, we intend to complete the proof of Theorem \ref{longpoc}. We first reveal the stability properties of particle systems in the following lemma. 
The proof is omitted since it is similar to that in \cite{zhuoqi}.

\begin{lem}\label{expo}
	Let Assumptions  \ref{assinf2} and \ref{assinf1} hold. For $i\in\mathbb{S}_N$ and any $a_1, a_2\in(0,p(L_5-L_6)/2]$, the solution $X_{t}^{i,N}$ to (\ref{IPS}) and the solution $X_{t}^{i}$ to  (\ref{nonIPS}) are exponentially stable in $p$-$th$ moment sense, i.e.,
	\begin{equation*}
		\lim_{t\rightarrow\infty}\frac{1}{t}\log\big(	\mathbb{E}|X_{t}^{i,N}|^{p}\big)\leq -a_1,
	\end{equation*}
	and
	\begin{equation*}
		\lim_{t\rightarrow\infty}\frac{1}{t}\log\big(	\mathbb{E}|X_{t}^{i}|^{p}\big)\leq -a_2.
	\end{equation*}
	
\end{lem}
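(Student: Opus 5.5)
The plan is to apply It\^o's formula to $e^{a t}|X_t|^p$ for a suitable rate $a>0$ and to absorb the measure term through the exchangeability of the particles (respectively, the identity $\|\mu_t^i\|_2^2=\mathbb{E}|X_t^i|^2$). Only Assumption \ref{assinf2} is really needed; Assumption \ref{assinf1} enters only through well-posedness.

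\textbf{Interacting system.} Fix $i\in\mathbb{S}_N$. Writing $\mu^{X,N}_t$ for the empirical measure, we have $\|\mu^{X,N}_t\|_2^2=\frac1N\sum_{j=1}^N|X_t^{j,N}|^2$. Applying It\^o's formula to $|X_t^{i,N}|^p$ and using Assumption \ref{assinf2} gives
\begin{equation*}
d|X_t^{i,N}|^p\le \frac p2|X_t^{i,N}|^{p-2}\Big(-L_5|X_t^{i,N}|^2+\frac{L_6}{N}\sum_{j=1}^N|X_t^{j,N}|^2\Big)dt+dM_t,
\end{equation*}
where $M$ is a local martingale. Young's inequality gives $|x|^{p-2}|y|^2\le \frac{p-2}{p}|x|^p+\frac2p|y|^p$. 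We then take expectations, using a stopping-time localization and Fatou's lemma to remove $M$. The particles are exchangeable, since the $(\xi^j,W^j)$ are i.i.d. and the system is symmetric, so $\mathbb{E}|X_t^{j,N}|^p=\mathbb{E}|X_t^{i,N}|^p$ for every $j$. Hence $u(t):=\mathbb{E}|X_t^{i,N}|^p$ satisfies
\begin{equation*}
u'(t)\le -\frac p2\Big(L_5-\frac{p-2}{p}L_6-\frac2pL_6\Big)u(t)=-\frac p2(L_5-L_6)u(t).
\end{equation*}
To make this rigorous, we apply It\^o's formula to $e^{at}|X_t^{i,N}|^p$ with $a\le p(L_5-L_6)/2$. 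This yields $\mathbb{E}|X^{i,N}_t|^p\le e^{-at}\mathbb{E}|\xi|^p$, and therefore $\limsup_{t\to\infty}\frac1t\log \mathbb{E}|X^{i,N}_t|^p\le -a_1$ for every $a_1\in(0,p(L_5-L_6)/2]$.

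\textbf{Non-interacting system.} The same computation applies to $X^i_t$, now with $\|\mathcal L(X_t^i)\|_2^2=\mathbb{E}|X_t^i|^2\le(\mathbb{E}|X_t^i|^p)^{2/p}$. The mixed term is $\mathbb{E}|X^i_t|^{p-2}\,\mathbb{E}|X^i_t|^2$. Hölder's inequality bounds it by $(\mathbb{E}|X_t^i|^p)^{(p-2)/p}(\mathbb{E}|X_t^i|^p)^{2/p}=\mathbb{E}|X_t^i|^p$. We therefore obtain the same differential inequality with rate $p(L_5-L_6)/2$, and the Gronwall lemma (\ref{ggron1}) gives the second claim.

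\textbf{Main obstacle.} The main obstacle is justifying the expectation step. We need the stochastic integral to be a true martingale, or at least need localization by $\tau_R=\inf\{t:|X_t|>R\}$ to pass to the limit, and we need the map $t\mapsto \mathbb{E}|X_t|^p$ to be finite and absolutely continuous so that the differential Gronwall lemma applies. This requires the finite-horizon moment bounds, i.e. the analogue of Lemma \ref{keylem} and Theorem \ref{thm1} on each $[0,T]$. Those bounds follow from Assumption \ref{assinf2}, which implies Assumption \ref{ass3}, together with Assumption \ref{ass2}. With them in hand, the limit $R\to\infty$ is handled by Fatou's lemma, after writing the inequality in integrated form with the factor $e^{at}$.
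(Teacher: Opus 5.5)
Your proposal is correct and is the standard argument the paper has in mind when it omits the proof and refers to \cite{zhuoqi}: It\^o's formula for $e^{at}|X_t|^p$ under Assumption \ref{assinf2}, Young's inequality plus exchangeability for the particle system, and $\mathbb{E}|X_t^i|^{p-2}\,\mathbb{E}|X_t^i|^2\le\mathbb{E}|X_t^i|^p$ for the McKean--Vlasov equation, which gives the rate $p(L_5-L_6)/2$. A small simplification for the particle system: applying It\^o's formula to $\sum_{i=1}^N|X_t^{i,N}|^p$ makes the Young step pathwise, since $\sum_i|X_t^{i,N}|^{p-2}\frac1N\sum_j|X_t^{j,N}|^2\le\sum_i|X_t^{i,N}|^p$; localization at $\inf\{t:|\mathbf{X}_t^N|>R\}$ plus Fatou's lemma then suffices without a priori moment bounds, and exchangeability is needed only at the end to divide by $N$.
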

We now establish the proof of Theorem~\ref{longpoc}.
\begin{proof}
	We first take the expectation on both sides of (\ref{poc}) and then differentiate w.r.t. $t$. By Assumptions \ref{assinf2} and \ref{assinf1}, it follows that
	\begin{equation}\label{buti}
		\begin{split}
			&\frac{d}{dt}\mathbb{E}|X_{t}^{i,N}-X_{t}^{i}|^2\\=&	2\mathbb{E}\langle X_{t}^{i,N}-X_{t}^{i},b(X_{t}^{i,N},\mu_{t}^{X,N})-b(X_{t}^{i},\mu_{t}^{i})\rangle 
			+\mathbb{E}\|\sigma(X_{t}^{i,N},\mu_{t}^{X,N})-\sigma(X_{t}^{i},\mu_{t}^{i})\|^2\\
			\leq&-h(R)\mathbb{E}|X_{t}^{i,N}-X_{t}^{i}|^2\\&+\big(g(R)+L_{1}\mathbb{E}|X_{t}^{i,N}|^{\gamma}+L_{1}\mathbb{E}|X_{t}^{i}|^{\gamma}\big)\big(\mathbb{E}|X_{t}^{i,N}-X_{t}^{i}|^2+\mathbb{E}[\mathbb{W}_2^2(\mu_{t}^{X,N},\mu_{t}^{i})] \big)\\
			\leq&\left(-h(R)+3g(R)+C_{\xi,\gamma,L_{1}}e^{-at}\right)\mathbb{E}|X_{t}^{i,N}-X_{t}^{i}|^2\\&+\left(2g(R)+C_{\xi,\gamma,L_{1}}e^{-at}\right)\mathbb{E}[\mathbb{W}_2^2(\tilde{\mu_{t}}^{X,N},\mu_{t}^{i})],
			%+C_{\xi,l_{3},l_{4},L_{4}}e^{-\frac{1}{2}at}\big(\mathbb{P}({	\Omega}^{i}(R))\big)^\frac{1}{2},
		\end{split}
	\end{equation}
	with $a:=\gamma(L_5-L_6)/2$.
	Next, summing (\ref{buti}) over $j = 1$ to $N$ and dividing by $N$ yields that
	\begin{equation*}
		\begin{split}
			&\frac{d}{dt}\frac{1}{N}\sum_{j=1}^{N}\mathbb{E}|X_{t}^{j,N}-X_{t}^{j}|^2\\\leq&	\left(-h(R)+3g(R)+C_{\xi,\gamma,L_{1}}e^{-at}\right)\frac{1}{N}\sum_{j=1}^{N}\mathbb{E}| X_{t}^{j,N}-X_{t}^{j}|^2\\
			&+\left(2g(R)+C_{\xi,\gamma,L_{1}}e^{-at}\right)\mathbb{E}[\mathbb{W}_2^2(\tilde{\mu_{t}}^{X,N},\mu_{t}^{i})].
		\end{split}
	\end{equation*}
	By taking $p=2$ in \cite[Theorem~1]{guillin}, we obtain that for any $\tilde{q}>2$, 
	\begin{equation*}\label{ppc}
		\begin{split}
			\mathbb{E}[\mathbb{W}_2^2(\tilde{\mu_{t}}^{X,N},\mu_{t}^{i})]\leq C_{d,\tilde{q}}\left(\mathbb{E}|X_t|^{\tilde{q}}\right)^\frac{2}{\tilde{q}}\Phi(N),
		\end{split}
	\end{equation*}
	where $C_{d,\tilde{q}}$ is a constant which only depends on ${d,\tilde{q}}$, and $$\Phi(N):=\begin{cases}N^{-\frac{1}{2}}+N^{-\frac{\tilde{q}-2}{\tilde{q}}},& d<4 ~\text{and}~\tilde{q}\neq 4,\\N^{-\frac{1}{2}}\log(1+N)+N^{-\frac{\tilde{q}-2}{\tilde{q}}},&d=4~ \text{and}~ \tilde{q}\neq 4,\\N^{-\frac{2}{d}}+N^{-\frac{\tilde{q}-2}{\tilde{q}}},&d>4 ~\text{and}~ \tilde{q}\neq \frac{d}{d-2}.\end{cases}$$
	By Lemma \ref{expo}, it can be concluded that
	\begin{equation}\label{lllo}
		\begin{split}
			&\frac{d}{dt}\frac{1}{N}\sum_{j=1}^{N}\mathbb{E}|X_{t}^{j,N}-X_{t}^{j}|^2\\\leq&	\left(-h(R)+3g(R)+C_{\xi,\gamma,L_{1}}e^{-at}\right)\frac{1}{N}\sum_{j=1}^{N}\mathbb{E}| X_{t}^{j,N}-X_{t}^{j}|^2\\
			&+\left(2g(R)+C_{\xi,\gamma,L_{1}}e^{-at}\right)C_{d,\tilde{q}}\left(\mathbb{E}|\xi|^{\tilde{q}}\right)^\frac{2}{\tilde{q}}e^{-bt}\Phi(N),
			%+C_{\xi,l_{3},l_{4},L_{4}}e^{-\frac{1}{2}at}\frac{1}{N}\sum_{j=1}^{N}\big(\mathbb{P}({	\Omega}^{j}(R))\big)^\frac{1}{2},\\
		\end{split}
	\end{equation}
	with $b:=L_5-L_6$.
	Denote $$\theta(t)=\frac{1}{N}\sum_{j=1}^{N}\mathbb{E}|X_{t}^{j,N}-X_{t}^{j}|^2.$$
	From (\ref{lllo}), we have
	\begin{equation*}
		\begin{split}
			{\theta}'(t)\leq&	\left(-h(R)+3g(R)+C_{\xi,\gamma,L_{1}}e^{-at}\right)\theta(t)\\
			&+\left(2g(R)+C_{\xi,\gamma,L_{1}}e^{-at}\right)C_{d,\tilde{q}}\left(\mathbb{E}|\xi|^{\tilde{q}}\right)^\frac{2}{\tilde{q}}e^{-bt}\Phi(N),\\&
			%+C_{\xi,l_{3},l_{4},L_{4}}e^{-\frac{1}{2}at}\frac{1}{N}\sum_{j=1}^{N}\big(\mathbb{P}({	\Omega}^{j}(R))\big)^\frac{1}{2},
		\end{split}
	\end{equation*}
	with $\theta(0)=0$. By the differential version of the Gronwall inequality (\ref{ggron1}),
	it holds that 
	$$A(t)=\left(-h(R)+3g(R)\right)t+\frac{C_{\xi,\gamma,L_{1}}}{a}(1-e^{-at}),$$and
	\begin{equation*}\label{pob}
		\begin{split}
			&	{\theta}(t)\\\leq& e^{A(t)}2g(R)C_{\xi,d,\tilde{q}}\Phi(N)\int_{0}^{t}e^{-A(s)-bs} ds+e^{A(t)}C_{\xi,\gamma,L_{1},d,\tilde{q}}\Phi(N)\int_{0}^{t}e^{-A(s)-as-bs} ds\\
			%	e^{A(t)}2g(R)C_{d,q}\left(\mathbb{E}|\xi|^q\right)^\frac{2}{q}\Phi(N)\int_{0}^{t}e^{-A(s)-bs} ds\\&+e^{A(t)}C_{\xi,\gamma,L_{1},d,q}\left(\mathbb{E}|\xi|^q\right)^\frac{2}{q}\Phi(N)\int_{0}^{t}e^{-A(s)-as-bs} ds\\
			%+e^{A(t)}C_{\xi,l_{3},l_{4},L_{4}}\frac{1}{N}\sum_{j=1}^{N}\big(\mathbb{P}({	\Omega}^{j}(R))\big)^\frac{1}{2}\int_{	0}^{t}e^{-A(s)-\frac{1}{2}as} ds\\
			\leq&\frac{2g(R)C_{\xi,\gamma,L_{1},d,\tilde{q},a}\varphi(N)}{h(R)-3g(R)-b}\left(e^{-bt}-e^{(-h(R)+3g(R))t}\right)\\&+\frac{C_{\xi,\gamma,L_{1},d,\tilde{q},a}\varphi(N)}{h(R)-3g(R)-a-b}\left(e^{-(a+b)t}-e^{(-h(R)+3g(R))t}\right)
			\\
			%&+\frac{C_{\xi,l_{3},l_{4},L_{1},L_{4},a}\frac{1}{N}\sum_{j=1}^{N}\big(\mathbb{P}({	\Omega}^{j}(R))\big)^\frac{1}{2}}{h(R)-3g(R)-\frac{1}{2}a}\left(e^{-\frac{1}{2}at}-e^{(-h(R)+3g(R))t}\right)\\
			\leq&\frac{2g(R)C_{\xi,\gamma,L_{1},d,\tilde{q},a}\varphi(N)e^{-bt}}{h(R)-3g(R)-b}+\frac{C_{\xi,\gamma,L_{1},d,q,a}\varphi(N)e^{-(a+b)t}}{h(R)-3g(R)-a-b}.\\
		\end{split}
	\end{equation*}
	%	where (\ref{3gR}) has been used.
	It should be emphasized that $R$ and $N$ are independent. Then, we let $R\rightarrow\infty$. From (\ref{3gRr}), it follows that
	\begin{equation*}
		\begin{split}
			\lim_{R\rightarrow\infty}	\frac{\varphi(N)e^{-(a+b)t}}{h(R)-3g(R)-a-b}=0,
		\end{split}
	\end{equation*}
	and
	\begin{equation*}
		\begin{split}
			{\theta}(t)\leq& 
			C_{\xi,\gamma,L_{1},d,\tilde{q},a,\lambda}\varphi(N)e^{-bt},
		\end{split}
	\end{equation*}
	which implies
	$$\lim_{t\rightarrow\infty}\lim_{N\rightarrow\infty}\frac{1}{N}\sum_{j=1}^{N}\mathbb{E}|X_{t}^{j,N}-X_{t}^{j}|^2=0.$$
\end{proof}

\section{Numerical simulation}
Consider the one-dimensional MVSDE 
\begin{equation}\label{exms}
	\begin{split}
		dX_t=(-18X_t^5-X_t^\frac{1}{3}[\mathbb{E}X_t]^4+2)dt+(X_t^2+EX_t)dW_t,
	\end{split}
\end{equation}		
with initial value $\xi\sim\mathcal{N}(0,1)$, where $\mathcal{N}(\cdot,\cdot)$ is normal distribution.
Under a given time grid, the strong pathwise propagation of chaos is evaluated by
$$\text{error}:=\sqrt{\frac{1}{N_l}\sum_{j=1}^{N_l}\left(X_T^{j,N_l}-\tilde{X}_T^{j,N_l}\right)^2},$$ at time $T$.
The particle system $\{\tilde{X}_T^{i,N_l}\}_{i\in\mathbb{S}_{N_l}}$ is formed by splitting the collection of Brownian motions that drive the system $\{X_T^{i,N_l}\}_{i\in\mathbb{S}_{N_l}}$  into two equal groups, each consisting of  $N_l/2$ particles. Consequently, the subsystem $\{\tilde{X}_T^{i,N_l,(1)}\}_{i\in\{1,\cdots,N_l/2\}}$ and $\{\tilde{X}_T^{i,N_l,(2)}\}_{i\in\{N_l/2+1,\cdots,N_l\}}$ are driven by the Brownian motions $(W^i)_{i\in\{1,\cdots,N_l/2\}}$ and $(W^i)_{i\in\{N_l/2+1,\cdots,N_l\}}$, respectively, with each employing only $N_l/2$ particles to approximate the mean-field term.
We let $N_{l+1}=2N_l$ and the number of paths $U=500$.
The numerical simulations are conducted with terminal times 
$T=1$ and $T=30$ to verify Theorem \ref{keythm} and Theorem \ref{longpoc}, respectively.
The results are depicted in Figure 1.
\begin{figure}[htbp]
	\centering
	\subfigure[$T=1$]{
		\includegraphics[width=0.45\linewidth]{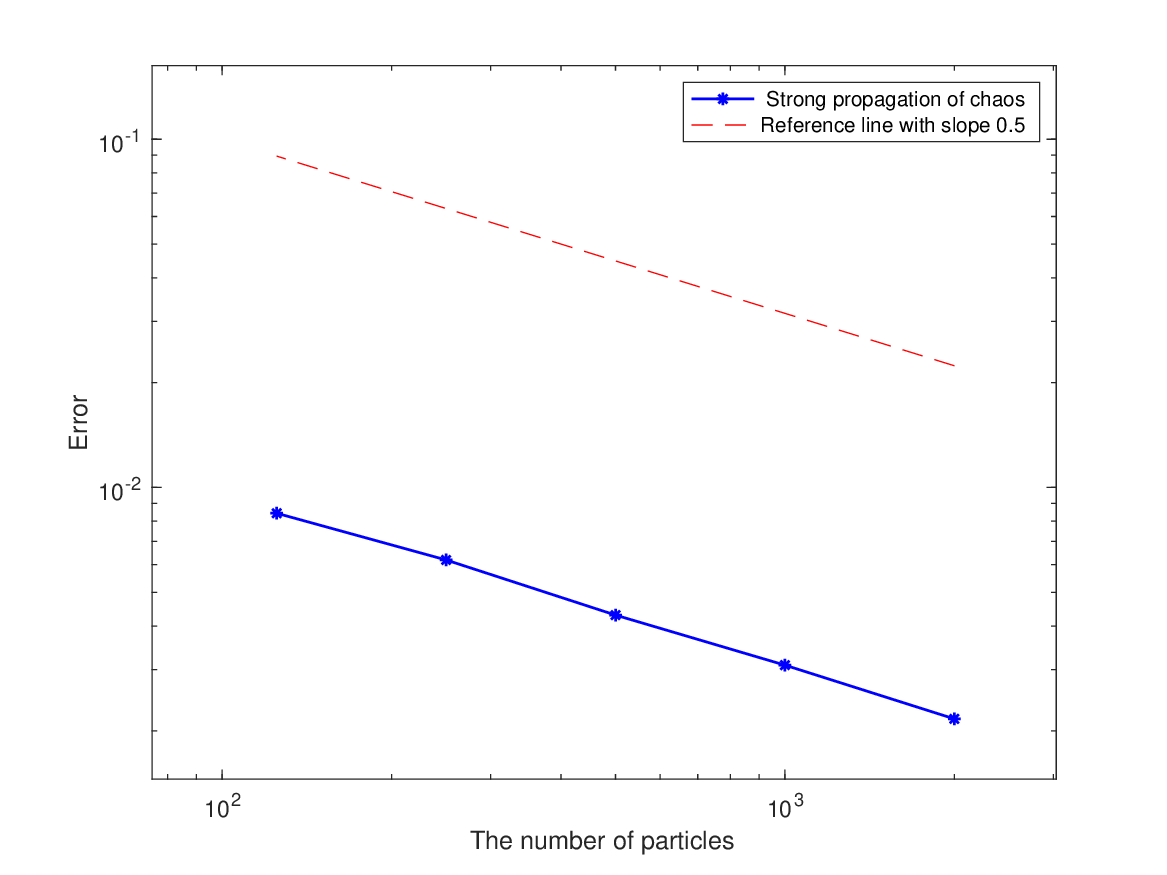}
		\label{tu0300}
	}
	\hfill
	\subfigure[$T=30$]{
		\includegraphics[width=0.45\linewidth]{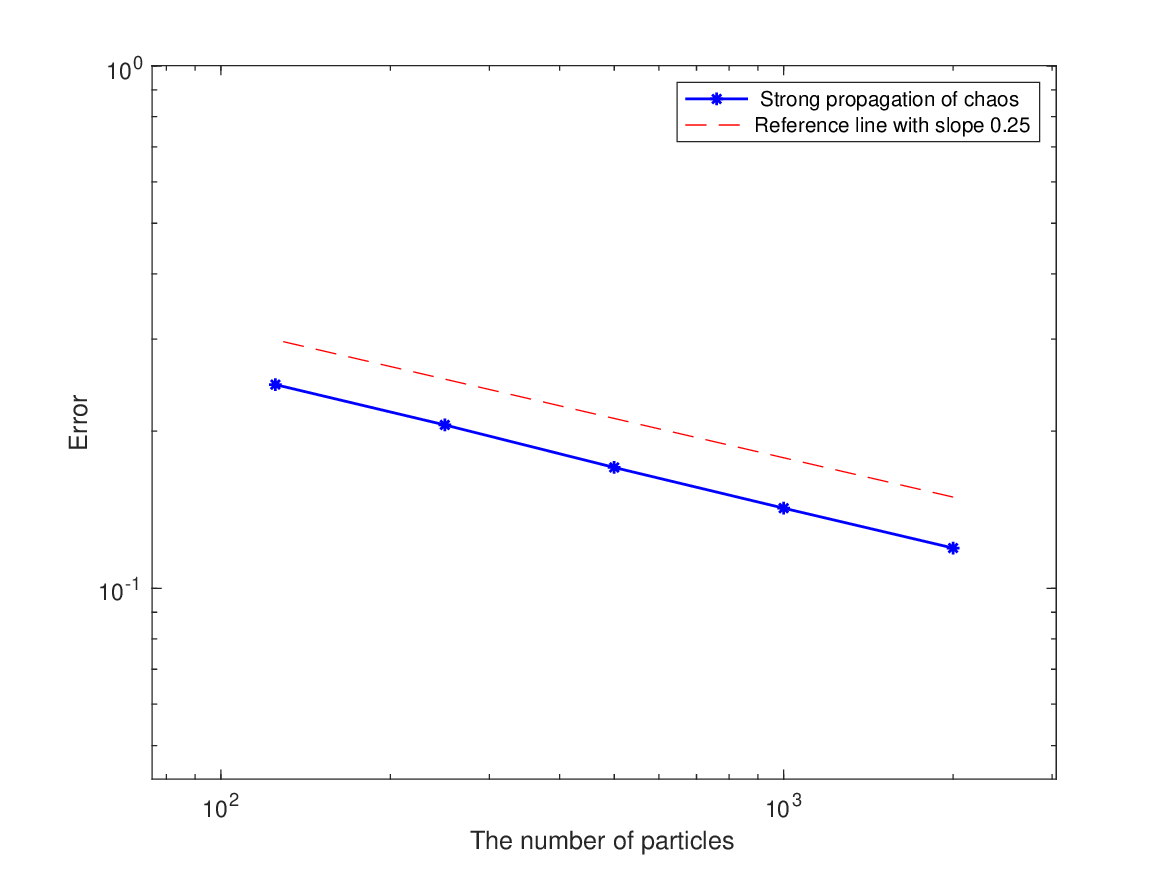}
		\label{tu0400}
	}
	\caption{Strong convergence with respect to the number of particles }
	\label{fig:scheme_comparison}
\end{figure}

\section{Appendix}\label{sec}		
\subsection{Proof of the exponential integrability stated in Remark \ref{rree01} }\label{aa6.1}
Let Assumption \ref{ass6} hold.
Define $V(t,X_t)=exp\left(e^{-\alpha t}f(|X_t|)\right)$. By Ito's formula, we have
\begin{equation}\label{exp}
	\begin{split}
		&V(t,X_t)-V(0,\xi)\\
		=&\int_{0}^{t} e^{-\alpha  s}  V(s,X_s)\Big[\langle \nabla f(|X_s|),b(X_s,\mu_s)
		\rangle+\frac{e^{-\alpha s}}{2}|\nabla f (|X_s|)\sigma(X_s,\mu_s) |^2\\&+\frac{1}{2}\text{trace}\left({\sigma}^{T}(X_s,\mu_s)\nabla^2 f(|X_s|)\sigma(X_s,\mu_s)\right)-\alpha f(|X_s|)\Big] ds\\
		&+\int_{0}^{t} e^{-\alpha  s}  V(s,X_s) \langle \nabla f (|X_s|),\sigma(X_s,\mu_s)dW_s\rangle.
	\end{split}
\end{equation}
After taking expectation on both sides of (\ref{exp}) and using Assumption  \ref{ass6}, we arrive at
\begin{equation*}
	\begin{split}
		&\mathbb{E}[V(t,X_t)]-\mathbb{E}[V(0,\xi)]\\\leq&\mathbb{E}\left[\int_{0}^{t} e^{-\alpha  s}  V(s,X_s) \langle \nabla f (|X_s|),\sigma(X_s,\mu_s)dW_s\rangle\right]\\&+\beta\mathbb{E}\left[\int_{0}^{t} e^{-\alpha  s}  V(s,X_s)\bar{f}(\|\mu_s\|_2^2)ds\right].
		%\\\leq&\mathbb{E}\left[\int_{0}^{t} e^{-\alpha  s}  V(t,X_s) \langle(\nabla U)(X_s),\sigma(X_s,\mu_s)dW_s\rangle\right]+\beta\int_{0}^{t}f(\|\mu_s\|_2^2)ds+\beta T.
	\end{split}
\end{equation*}
Since $\mu_s\in\mathcal{P}_2(\mathbb{R}^d)$, $\|\mu_s\|_2^2$ is a continuous function of $s$ on $[0,t]$, and $\bar{f}$ is a polynomial function, it follows that $\int_{0}^{t}\bar{f}(\|\mu_s\|_2^2)ds<\infty$. Then, it follows that
%employing stopping time theorem leads to 
\begin{equation*}\label{U}
	\begin{split}
		\sup_{0\leq t\leq T}	\mathbb{E}[V(t,X_t)]&\leq\mathbb{E}[V(0,\xi)]+\beta\mathbb{E}\left[\int_{0}^{T} e^{-\alpha  t}  V(t,X_t)\bar{f}(\|\mu_t\|_2^2)dt\right].
		%+\beta\int_{0}^{t}f(\mathbb{E}|X_s|^2)ds+\beta T\\&\leq\mathbb{E}[e^{U(\xi)}]+\beta\int_{0}^{t}\mathbb{E}\left[f(|X_s|^2)\right]ds+\beta T,
	\end{split}
\end{equation*}
Consequently, by Gronwall's inequality, we obtain
%the feature of function $\bar{f}$ gives
$$	\sup_{0\leq t\leq T}	\mathbb{E}[V(t,X_t)]\leq\mathbb{E}[V(0,\xi)]\exp\left(\beta\int_{0}^{t} e^{-\alpha  s}  \bar{f}(\|\mu_s\|_2^2)ds\right)<\infty,$$
which means
$
\sup_{0\leq t\leq T}\mathbb{E}[\exp({f(|X_t|)})]<\infty,
$
so the exponential integrability is shown.

\subsection{Verification  of coefficient conditions for Example (\ref{exm}) in Remark \ref{numrem}}
Let $$b(x,\mu)=-18x^5-x^\frac{1}{3}\left(\int_{\mathbb{R}}x\mu(dx)\right)^4+2$$ and
$$\sigma(x,\mu)=x^2+\int_{\mathbb{R}}x\mu(dx).$$
Since Assumptions \ref{ass4} and \ref{ass2} can be easily verified, we only provide detailed verification for Assumptions \ref{ass1}, \ref{ass3}, and \ref{ass6}.
\subsubsection{Verification of  Assumption \ref{ass1}}
One can verify that
\begin{equation*}
	\begin{split}
		&2\langle x-y, b(x,\mu)-b(y,\nu)\rangle+|\sigma(x,\mu)-\sigma(y,\nu)|^2\\
		\leq&-36\langle x-y, x^5-y^5\rangle-2\langle x-y, x^\frac{1}{3}\big(\int_{\mathbb{R}}x\mu(dx)\big)^4-y^\frac{1}{3}\big(\int_{\mathbb{R}}y\nu(dy)\big)^4\rangle\\&+2|x^2-y^2|^2+2\big|\int_{\mathbb{R}}x\mu(dx)-\int_{\mathbb{R}}y\nu(dy)\big|^2\\
		\leq&-36(x^4+x^3y+x^2y^2+xy^3+y^4)| x-y|^2+2|x+y|^2|x-y|^2\\&-2\langle x-y, x^\frac{1}{3}\big(\int_{\mathbb{R}}x\mu(dx)\big)^4-y^\frac{1}{3}\big(\int_{\mathbb{R}}x\mu(dx)\big)^4\rangle\\&-2\langle x-y, y^\frac{1}{3}\big(\int_{\mathbb{R}}x\mu(dx)\big)^4-y^\frac{1}{3}\big(\int_{\mathbb{R}}y\nu(dy)\big)^4\rangle+2\mathbb{W}_{1}^{2}(\mu,\nu)\\
		\leq&(-36x^4-36x^3y-36x^2y^2-36xy^3-36y^4+4x^2+4y^2)| x-y|^2+2\mathbb{W}_{1}^{2}(\mu,\nu)\\&+4|x-y| |y|^\frac{1}{3}\big|\int_{\mathbb{R}}x\mu(dx)-\int_{\mathbb{R}}y\nu(dy)\big|\big(\big|\int_{\mathbb{R}}x\mu(dx)\big|^3+\big|\int_{\mathbb{R}}y\nu(dy)\big|^3\big)\\
		%	\leq&6\big(\int_{\mathbb{R}}x\mu(dx)\big)^4| x-y|^2+12|x-y| |y|\big[\big(\int_{\mathbb{R}}x\mu(dx)\big)^3+\big(\int_{\mathbb{R}}y\nu(dy)\big)^3\big]\big[\int_{\mathbb{R}}x\mu(dx)-\int_{\mathbb{R}}y\nu(dy)\big]
		%			\\&+2|\int_{\mathbb{R}}x\mu(dx)-\int_{\mathbb{R}}y\nu(dy)|^2\\
		\leq&(-36x^4-36x^3y-36x^2y^2-36xy^3-36y^4+4x^2+4y^2)| x-y|^2+2\mathbb{W}_{1}^{2}(\mu,\nu)\\&+\big(|y|^\frac{2}{3}+2\|\mu\|_6^6+2\|\nu\|_6^6\big)\big(|x-y|^2 +\mathbb{W}_{1}^{2}(\mu,\nu)\big).
		%	\leq&\big(1+|y|^\frac{1}{3}+2\|\mu\|_6^6+2\|\nu\|_6^6\big)\big(|x-y|^2 +\mathbb{W}_{2}^{2}(\mu,\nu)\big)
	\end{split}
\end{equation*}
Define 
$$\varphi(x,y)=-36x^4-36x^3y-36x^2y^2-36xy^3-36y^4+4x^2+4y^2.$$
Through the analysis of function $\varphi(x,y)$, one can show that
\begin{equation*}
	\begin{split}
		&\varphi(x,y)\leq \frac{4}{9}.
	\end{split}
\end{equation*}
Thus,
\begin{equation*}
	\begin{split}
		&2\langle x-y, b(x,\mu)-b(y,\nu)\rangle+|\sigma(x,\mu)-\sigma(y,\nu)|^2\\
		\leq&\big(2+|y|^\frac{2}{3}+2\|\mu\|_6^6+2\|\nu\|_6^6\big)\big(|x-y|^2 +\mathbb{W}_{2}^{2}(\mu,\nu)\big),
	\end{split}
\end{equation*}
which implies that the drift and diffusion satisfy the Assumption \ref{ass1} with $L(R)=R^\frac{2}{3}+2$.
% and the definition of $\mathbb{W}_1$ Wasserstein distance in \cite{W1} is used here. Additionally, 
\subsubsection{Verification  of Assumption \ref{ass3}}
It is straightforward to check that
\begin{equation*}
	\begin{split}
		&2\langle x, b(x,\mu)\rangle+|\sigma(x,\mu)|^2\\
		\leq&-36|x|^6-2x^\frac{4}{3}\big(\int_{\mathbb{R}}x\mu(dx)\big)^4+4x+2|x|^4+2\big(\int_{\mathbb{R}}x\mu(dx)\big)^2\\
		\leq&6+2\|\mu\|_2^2,
	\end{split}
\end{equation*}			
which means that Assumption \ref{ass3} holds. 
\subsubsection{Verification  of Assumption \ref{ass6}}
Let $f(|x|)=\frac{1}{4}|x|^2+1$.		
It is obvious that, for $R>0$, 
$$
\lim_{R\rightarrow\infty}\big(f(R)-L(R)\big)=\infty,
$$
so (\ref{R}) holds with $\kappa=1$. Additionally, we have
\begin{equation*} 
	\begin{split}
		&	\langle \nabla f(|x|),b(x,\mu)
		\rangle+\frac{1}{2}|\sigma(x,\mu)\nabla f(|x|)|^2+\frac{1}{2}\text{trace}\left({\sigma}^{T}(x,\mu)\nabla^2 f(|x|)\sigma(x,\mu)\right)\\
		\leq&-9|x|^6-\frac{1}{2}|x|^\frac{4}{3}\big(\int_{\mathbb{R}}x\mu(dx)\big)^4\\&+x+\frac{1}{4}|x|^6+\frac{1}{4}|x|^2\big(\int_{\mathbb{R}}x\mu(dx)\big)^2+\frac{1}{2}|x|^4+\frac{1}{2}\big(\int_{\mathbb{R}}x\mu(dx)\big)^2\\
		\leq&\frac{1}{2}|x|^2+\frac{3}{2}+\|\mu\|_2^3+\|\mu\|_2^2
		\\
		=&
		2f(|x|)+\bar{f}(\|\mu\|_2^2),
	\end{split}
\end{equation*} 
with $\alpha=2$, $\beta=1$ and $\bar{f}(x)=(x)^{\frac{3}{2}}+x^2$.

\section*{Funding}
This work is supported by the National Natural Science Foundation of
China (12271368, 12501579 and 62373383), the Fundamental Research Funds
for the Central Universities of South-Central MinZu University (CZQ25020, YZY24010, CZZ25007), and Fund for Academic Innovation Teams of South-Central
Minzu University (XTZ24004).
%\bibliographystyle{plain}
%\bibliography{reference}

\section*{Declaration of competing interest}
The authors declare that they have no known competing financial interests or personal relationships that could have appeared to influence the work reported in this paper.

\section*{Data availability}
No data was used for the research described in the paper.

\end{document}